\newtheorem{theorem}{Theorem}
\newtheorem{lemma}[theorem]{Lemma}
\newtheorem{prop}[theorem]{Proposition}
\newtheorem{cor}[theorem]{Corollary}
\theoremstyle{definition}
\theoremstyle{plain}
\def\N{{\mathbb N}}
\def\eps{\varepsilon}
\def\lcm{\operatorname{lcm}}
\newcommand{\ep}[2]{\equiv{#1}\pmod{#2}}
\def\ord{\operatorname{ord}}
\def\maxx{{\max\nolimits}}
\date{}
\title{The divisibility of $a^n-b^n$  by powers of $n$}
\author{ Chris Smyth}
\address{School of Mathematics and Maxwell Institute for Mathematical Sciences, University of Edinburgh,
    James Clerk Maxwell Building, King's Buildings,
    Mayfield Road, Edinburgh EH9 3JZ, UK.}
\begin{document}
\subjclass[2000]{Primary 11B37, Secondary 11D61}
\keywords{Divisibility, Zsigmondy's Theorem, $a^n-b^n$
}

\begin{abstract}
For given integers $a,b$ and $j\ge 1$ we determine the set $R^{(j)}_{a,b}$ of integers $n$ for which $a^n-b^n$ is divisible by $n^j$. For $j=1, 2$, this set is usually infinite;  we determine  explicitly the exceptional cases for which $a,b$ the set $R^{(j)}_{a,b}\,(j=1, 2)$ is finite. For $j=2$, we use  Zsigmondy's Theorem for this.  For $j\ge 3$ and $\gcd(a,b)=1$, $R^{(j)}_{a,b}$ is probably always finite; this seems difficult to prove, however.

We also show that determination of the set of integers $n$ for which $a^n+b^n$ is divisible by $n^j$ can be reduced to that of $R^{(j)}_{a,b}$.
\end{abstract}
\maketitle
\section{Introduction}\label{S-1}
Let $a$,  $b$ and $j$ be fixed integers, with $j\ge 1$. The aim of this paper is to find the set $R^{(j)}_{a,b}$ of all positive integers $n$ such that $n^j$ divides $a^n-b^n$. For $j=1,2,\dots,$ these sets are clearly nested, with common intersection $\{1\}$. Our first  results (Theorems \ref{T-main1} and \ref{T-mainj}) describe this set in the case that $\gcd(a,b)=1$. In Section \ref{S-gto1} we describe  (Theorem \ref{T-general})  the set in the general situation where $\gcd(a,b)$ is unrestricted.

  \begin{theorem}\label{T-main1}
  Suppose that $\gcd(a,b)=1$. Then 
 the elements of the set $R^{(1)}_{a,b}$ consist of those integers $n$ whose prime factorization can be written in the form
  \begin{equation}\label{E-fact}
 n=p_1^{k_1}p_2^{k_2}\dots p_r^{k_r}\quad (p_1<p_2<\dots<p_r, \text{ all }k_i\ge 1),
 \end{equation}  
  where  $p_i\mid a^{n_i}-b^{n_i}\,(i=1,\dots,r)$, with $n_1=1$ and    $n_i=p_1^{k_1}p_2^{k_2}\dots p_{i-1}^{k_{i-1}}$ $(i=2,\dots,r)$. 
\end{theorem}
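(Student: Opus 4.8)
The plan is to reduce the single congruence $n\mid a^n-b^n$ to a family of prime-power conditions, then treat each prime $p_i$ via the multiplicative order of $ab^{-1}$ modulo $p_i$, and finally promote a congruence mod $p_i$ to one mod $p_i^{k_i}$ using a lifting-the-exponent estimate. First I would observe that, writing $n=p_1^{k_1}\cdots p_r^{k_r}$ as in \eqref{E-fact}, one has $n\mid a^n-b^n$ if and only if $p_i^{k_i}\mid a^n-b^n$ for every $i=1,\dots,r$. For a prime $p=p_i$ dividing $a^n-b^n$, the hypothesis $\gcd(a,b)=1$ forces $p\nmid ab$ (otherwise $p$ would divide both $a$ and $b$); so $ab^{-1}$ is a unit mod $p$, and letting $d=d_i=\ord_p(ab^{-1}\bmod p)$ we have $d\mid p-1$ and, for every $m\ge 1$, $p\mid a^m-b^m \iff d\mid m$.

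The key combinatorial step is the equivalence $p_i\mid a^n-b^n \iff p_i\mid a^{n_i}-b^{n_i}$. One implication is immediate from $n_i\mid n$. Conversely, if $p_i\mid a^n-b^n$ then $d_i\mid n$; since every prime factor of $d_i$ divides $p_i-1$, it is strictly smaller than $p_i$, and being also a divisor of $n$ it must lie in $\{p_1,\dots,p_{i-1}\}$. Comparing $p_\ell$-adic valuations for $\ell<i$ then shows $d_i\mid p_1^{k_1}\cdots p_{i-1}^{k_{i-1}}=n_i$, whence $p_i\mid a^{n_i}-b^{n_i}$. (For $i=1$ this degenerates correctly: $d_1$ has all prime factors $<p_1$ yet divides $n$, forcing $d_1=1=n_1$.)

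It remains to upgrade the congruence mod $p_i$ to one mod $p_i^{k_i}$, i.e. to show $p_i\mid a^n-b^n$ already implies $p_i^{k_i}\mid a^n-b^n$. Here I would invoke the lifting-the-exponent lemma. If $p_i$ is odd and $d_i\mid n$, then $v_{p_i}(a^n-b^n)=v_{p_i}(a^{d_i}-b^{d_i})+v_{p_i}(n)\ge 1+k_i>k_i$ (using $p_i\nmid d_i$). If $p_i=2$, then $d_i=1$, so $a$ and $b$ are both odd and $n$ is even (as $k_1\ge1$), and the even-exponent form of the lemma gives $v_2(a^n-b^n)=v_2(a-b)+v_2(a+b)+v_2(n)-1\ge k_1+1>k_1$. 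In either case $p_i^{k_i}\mid a^n-b^n$. Chaining the three steps: $n\in R^{(1)}_{a,b}$ iff $p_i^{k_i}\mid a^n-b^n$ for all $i$, iff $p_i\mid a^n-b^n$ for all $i$, iff $p_i\mid a^{n_i}-b^{n_i}$ for all $i$ — which is precisely the claimed description.

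The proof is essentially careful bookkeeping rather than containing a deep obstacle; the only points demanding attention are the $p=2$ case of the lifting-the-exponent step, whose valuation formula differs from the odd-prime one, and the verification that the recursive base case $i=1$ with $n_1=1$ is consistent with the order-divisibility cascade.
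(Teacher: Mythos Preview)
Your argument is correct. The three-step chain you set up — CRT reduction to prime powers, the order argument showing $d_i\mid n\Leftrightarrow d_i\mid n_i$ because every prime factor of $d_i$ is smaller than $p_i$, and then lifting-the-exponent to boost $p_i\mid a^n-b^n$ to $p_i^{k_i}\mid a^n-b^n$ — is clean and complete, and you handled the $p=2$ case properly.

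Your route differs from the paper's in organisation rather than in underlying ingredients. The paper proves Theorems~\ref{T-main1} and~\ref{T-mainj} together by first developing two structural propositions: Proposition~\ref{P-down} (if $n\in R^{(j)}_{a,b}$ then $n/p_{\max}\in R^{(j)}_{a,b}$) and Proposition~\ref{P-upj} (describing, for $n\in R^{(j)}_{a,b}$, exactly which prime powers $p^k$ give $np^k\in R^{(j)}_{a,b}$). Descending via Proposition~\ref{P-down} yields the chain $n=n_{r+1},n_r,\dots,n_1=1$ all in $R^{(j)}_{a,b}$, and Proposition~\ref{P-upj} then reads off the divisibility conditions at each step; the converse direction comes from ascending via Proposition~\ref{P-upj}. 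Underneath, Proposition~\ref{P-down} rests on the same fact you used (that $p_*=\ord_p(ab^{-1})$ divides $p-1$, Lemma~\ref{L-ord}), and Proposition~\ref{P-upj} rests on Lemma~\ref{L-ab}, which is exactly the lifting-the-exponent computation. So the content is the same; the paper's packaging is built to handle all $j\ge 1$ uniformly and to give $R^{(j)}_{a,b}$ a tree structure (each $n$ linked to $n/p_{\max}$), whereas your direct argument is shorter and more self-contained for the case $j=1$, establishing the full biconditional in one pass without the auxiliary up/down machinery.
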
   
  
In this theorem, the $k_i$ are arbitrary positive integers. The result is essentially contained in \cite{S}, which described the indices $n$ for which the generalised Fibonacci numbers $u_n$ are divisible by $n$. However, we present a self-contained proof in this paper.

 On the other hand, for $j\ge 2$, the  exponents $k_i$ are more restricted.
  \begin{theorem}\label{T-mainj}
  Suppose that $\gcd(a,b)=1$, and $j\ge 2$. Then 
 the elements of the set $R^{(j)}_{a,b}$ consist of those integers $n$ whose prime factorization can be written in the form
  (\ref{E-fact}),
   where  
   $$
   p_1^{(j-1)k_1} \text{ divides } \begin{cases} a-b  \qquad\qquad\quad\text{ if } p_1>2;\\ \lcm(a-b,a+b) \text{ if } p_1=2, \end{cases}
   $$
    and $p_i^{(j-1)k_i}\mid a^{n_i}-b^{n_i}$, with    $n_i=p_1^{k_1}p_2^{k_2}\dots p_{i-1}^{k_{i-1}}\,(i=2,\dots,r)$.

\end{theorem}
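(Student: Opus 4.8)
The plan is to reduce to Theorem~\ref{T-main1} and then sharpen it by tracking $p$-adic valuations via the ``lifting the exponent'' lemma. Since $R^{(j)}_{a,b}\subseteq R^{(1)}_{a,b}$ for $j\ge 2$, Theorem~\ref{T-main1} already guarantees that every $n\in R^{(j)}_{a,b}$ has a factorization of the shape (\ref{E-fact}) with $p_i\mid a^{n_i}-b^{n_i}$ for each $i$ (where $n_1=1$, $n_i=p_1^{k_1}\cdots p_{i-1}^{k_{i-1}}$), and conversely that every $n$ of this shape lies in $R^{(1)}_{a,b}$. So the whole content of the theorem is to decide, among integers $n$ of that shape, for which ones $v_{p_i}(a^n-b^n)\ge jk_i$ holds for every $i$, where $v_p$ denotes the $p$-adic valuation. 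Note that $\gcd(a,b)=1$ forces $p_i\nmid ab$, since otherwise $p_i$ would divide exactly one of $a,b$ and hence divide neither $a^{n_i}-b^{n_i}$ nor $a^n-b^n$.

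Fix an index $i$ and put $p=p_i$, $k=k_i$. In either direction of the proof one has $p\mid a^{n_i}-b^{n_i}$ (from Theorem~\ref{T-main1} in one direction, and from the hypothesis $p^{(j-1)k}\mid a^{n_i}-b^{n_i}$ together with $j\ge2$ in the other), so writing $a^n-b^n=(a^{n_i})^{n/n_i}-(b^{n_i})^{n/n_i}$ and noting $p\nmid a^{n_i}b^{n_i}$ and $v_p(n/n_i)=k$, the lifting-the-exponent lemma applies. For $p$ odd it yields $v_p(a^n-b^n)=v_p(a^{n_i}-b^{n_i})+k$, so the requirement $v_p(a^n-b^n)\ge jk$ becomes exactly $v_p(a^{n_i}-b^{n_i})\ge(j-1)k$, i.e.\ $p^{(j-1)k}\mid a^{n_i}-b^{n_i}$; for $i=1$, where $n_1=1$, this reads $p_1^{(j-1)k_1}\mid a-b$.

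The one case demanding separate care is $p=p_1=2$. Then $\gcd(a,b)=1$ and $2\mid a-b$ (the latter from Theorem~\ref{T-main1}) force $a$ and $b$ to be odd, and $n$ is even; the $p=2$ form of the lemma then gives $v_2(a^n-b^n)=v_2(a-b)+v_2(a+b)-1+k_1$. Because $(a+b)-(a-b)=2b$ has $2$-adic valuation $1$, the smaller of $v_2(a-b)$ and $v_2(a+b)$ equals $1$, so $v_2(a-b)+v_2(a+b)-1=\max(v_2(a-b),v_2(a+b))=v_2(\lcm(a-b,a+b))$. Hence $v_2(a^n-b^n)\ge jk_1$ if and only if $2^{(j-1)k_1}\mid\lcm(a-b,a+b)$. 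Collecting these equivalences over all $i$ proves the theorem in both directions simultaneously.

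The obstacles are bookkeeping rather than conceptual: one must apply the lifting-the-exponent lemma with the correct base $a^{n_i},b^{n_i}$ (not $a,b$), so that the extra valuation acquired is precisely $v_p(n/n_i)=k_i$; and one must dispose of the $p=2$ anomaly cleanly, the key observation being that the quantity $v_2(a-b)+v_2(a+b)-1$ delivered by the lemma coincides with $v_2(\lcm(a-b,a+b))$, which is exactly what permits the uniform ``$2^{(j-1)k_1}\mid\lcm(a-b,a+b)$'' formulation. If the lifting-the-exponent identities have not been recorded earlier in the paper, I would first supply their standard short proofs (induction on $v_p(m)$, starting from an analysis of $v_p(A^p-B^p)$ and, for $p=2$, of $v_2(A^2-B^2)$, using $A^p-B^p=(A-B)\sum_{t=0}^{p-1}A^{t}B^{p-1-t}$).
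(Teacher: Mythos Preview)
Your argument is correct. The lifting-the-exponent computation is applied with the right base pair $(a^{n_i},b^{n_i})$ so that the extra valuation is exactly $k_i$, and your handling of $p=2$ via $v_2(a-b)+v_2(a+b)-1=v_2\bigl(\lcm(a-b,a+b)\bigr)$ is clean and correct.

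The paper's route is organized a little differently. Rather than first invoking Theorem~\ref{T-main1} and then sharpening, it proves Theorems~\ref{T-main1} and~\ref{T-mainj} in one pass from two structural propositions: Proposition~\ref{P-down}, which shows that dividing $n\in R^{(j)}_{a,b}$ by its largest prime factor keeps it in $R^{(j)}_{a,b}$ (so all the partial products $n_i$ already lie in $R^{(j)}_{a,b}$), and Proposition~\ref{P-upj}, which for $n\in R^{(j)}_{a,b}$ describes exactly which prime powers $p^k$ have $np^k\in R^{(j)}_{a,b}$. The divisibility conditions on the $p_i^{(j-1)k_i}$ are then read off from Proposition~\ref{P-upj} applied at each $n_i$. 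The underlying engine is the same as yours---Lemma~\ref{L-ab} is precisely the LTE identity you use---but the paper packages it as an ``up/down'' pair of propositions that also drive later arguments (e.g.\ the infinitude results via Zsigmondy). Your direct valuation computation is shorter for the purpose of Theorem~\ref{T-mainj} alone, while the paper's propositions are reusable building blocks; in particular, the paper obtains the stronger intermediate fact $n_i\in R^{(j)}_{a,b}$, which your argument neither needs nor proves.
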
 

 Thus we see that construction of $n\in R^{(j)}_{a,b}$ depends upon finding a  prime $p_i$ not used previously with $a^{n_i}-b^{n_i}$ being divisible by $p_i^{j-1}$. This presents no problem for $j=2$, so that $R^{(2)}_{a,b}$, as well as $R^{(1)}_{a,b}$, are usually infinite. See Section \ref{S-finite} for details, including the exceptional cases when they are finite. 
    However, for $j\ge 3$  the condition $p_i^{j-1}\mid a^{n_i}-b^{n_i}$ is only     rarely satisfied. This suggests strongly that in this case $R^{(j)}_{a,b}$ is always finite for $\gcd(a,b)=1$. This seems very difficult to prove, even assuming the ABC Conjecture. A result of Ribenboim and Walsh \cite{MR1670556} implies that, under ABC, the powerful part of $a^n-b^n$ cannot often be large. But this is not strong enough for what is needed here.
    On the other hand,  $R^{(j)}_{a,b}\, (j\ge 3)$ can be made arbitrarily large by choosing $a$ and $b$ such that $a-b$ is a powerful number. For instance, choosing  $a=1+(q_1q_2\dots q_s)^{j-1}$ and $b=1$, where $q_1,q_2,\dots, q_s$ are distinct primes, then $R^{(j)}_{a,b}$ contains the $2^s$ numbers $q_1^{\eps_1}q_2^{\eps_2}\dots q_s^{\eps_s}$ where the $\eps_i$ are $0$ or $1$.  See Example \ref{X-j} in Section \ref{S-exs}.

 In the next section we give preliminary results need for the proof of the theorem. We prove it in Section \ref{S-proof}. In Section \ref{S-gto1} we describe  (Theorem \ref{T-general})  $R^{(j)}_{a,b}$,  where $\gcd(a,b)$ is unrestricted. In  Section \ref{S-finite} we find all $a,b$ for which $R^{(2)}_{a,b}$ is finite (Theorem \ref{T-finite}). In  Section \ref{S-plus} we discuss the divisibility of $a^n+b^n$ by powers of $n$. In Section \ref{S-exs} we give some examples, and make some final remarks in Section \ref{S-8}.

 \section{Preliminary results}
 
We first prove a version of Fermat's Little Theorem that gives a little bit more information in the case $x\ep{1}{p}$.
\begin{lemma}\label{L-Wee} For $x\in \mathbb Z$ and $p$ an odd prime we have
 \begin{equation}\label{E-Wee}
 x^{p-1}+x^{p-2}+\dots+x+1\equiv\begin{cases} p\pmod{p^2} \text{ if } x\ep{1}{p};\\1\pmod{p} \text{ otherwise }.\end{cases}
 \end{equation}
 \end{lemma}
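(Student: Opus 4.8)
The plan is to split into the two cases according to whether $x\equiv 1\pmod p$, and in each case to exploit the identity $(x-1)(x^{p-1}+x^{p-2}+\dots+x+1)=x^p-1$.

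First I would dispose of the case $x\not\equiv 1\pmod p$. Here $x-1$ is a unit modulo $p$. By Fermat's Little Theorem $x^p\equiv x\pmod p$, so $x^p-1\equiv x-1\pmod p$; writing this as $(x-1)(x^{p-1}+\dots+1)\equiv (x-1)\cdot 1\pmod p$ and cancelling the unit $x-1$ gives $x^{p-1}+\dots+x+1\equiv 1\pmod p$, as claimed.

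Next, for $x\equiv 1\pmod p$ I would write $x=1+pt$ with $t\in\mathbb Z$ and expand each power by the binomial theorem: for $0\le i\le p-1$ we have $x^i=(1+pt)^i\equiv 1+ipt\pmod{p^2}$, since every term involving $(pt)^2$ or higher vanishes mod $p^2$. Summing over $i$ from $0$ to $p-1$ gives
$$
x^{p-1}+\dots+x+1\equiv p+pt\sum_{i=0}^{p-1}i=p+pt\cdot\frac{p(p-1)}{2}\pmod{p^2}.
$$
Because $p$ is odd, $\tfrac{p-1}{2}$ is an integer, so the second term equals $p^2\cdot t\cdot\tfrac{p-1}{2}\equiv 0\pmod{p^2}$, leaving $x^{p-1}+\dots+x+1\equiv p\pmod{p^2}$.

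There is no serious obstacle here; the only point requiring care is the use of the hypothesis that $p$ is odd, which is exactly what makes $\frac{p(p-1)}{2}$ divisible by $p$ and hence kills the $ipt$ contributions modulo $p^2$ (for $p=2$ the statement indeed fails, e.g.\ $x=3$). I would also remark at the start that for $x=1$ the left-hand side is literally $p$, so that boundary case is consistent with, and subsumed by, the first case of the formula.
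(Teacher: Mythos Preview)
Your proof is correct and follows essentially the same approach as the paper. For the case $x\equiv 1\pmod p$ your argument is identical to the paper's; for $x\not\equiv 1\pmod p$ the paper factors $x^p-x$ rather than $x^p-1$ and then adds $1$ back, but this is only a cosmetic variation on your direct cancellation of the unit $x-1$.
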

 \begin{proof}If $x\ep{1}{p}$, say $x=1+kp$, then $x^j\ep{1+jkp}{p^2}$, so that
 \begin{equation}\label{E-Wee2}
 x^{p-1}+x^{p-2}+\dots+x+1\equiv p+kp\sum_{j=0}^{p-1}j\ep{p}{p^2}.
 \end{equation}
 Otherwise 
  \begin{equation}\label{E-Wee3}
x(x-1)(x^{p-2}+\dots+x+1)= x^{p}-x\ep{0}{p},
 \end{equation}
 so that for $x\not\equiv{1}\pmod{p}$ we have $x(x^{p-2}+\dots+x+1)\ep{0}{p}$, and hence 
 \begin{equation}\label{E-Wee4}
 x^{p-1}+x^{p-2}+\dots+x+1\equiv x(x^{p-2}+\dots+x+1) +1\ep{1}{p}.
 \end{equation}
 \end{proof}
 
 The following is a result of Birkoff and Vandiver \cite[Theorem III]{MR1503541}. It is also special case  of Lucas \cite[p. 210]{MR1505164}, as corrected for $p=2$ by Carmichael \cite[Theorem X]{MR1517263}.
 \begin{lemma}\label{L-ab} Let $\gcd(a,b)=1$ and $p$ be prime with $p\mid a-b$. Define $t>0$ by $p^t\|a-b$ for $p>2$ and  $2^t\|\lcm(a-b,a+b)$ if $p=2$. Then for $\ell>0$
 \begin{equation}\label{E-ab}
p^{t+\ell}\|a^{p^\ell}-b^{p^\ell}.
 \end{equation}
 On the other hand,  if $p\nmid a-b$ then for $\ell\ge 0$
 \begin{equation}\label{E-ab2}
p\nmid a^{p^\ell}-b^{p^\ell}.
 \end{equation} 
 \end{lemma}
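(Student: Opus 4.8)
The plan is to prove Lemma~\ref{L-ab} by the standard \emph{lifting-the-exponent} argument, treating the three cases ($p$ odd, $p=2$, and $p\nmid a-b$) separately, and inducting on $\ell$.

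First consider the case $p$ odd with $p^t\|a-b$. The key step is a one-step lifting claim: if $p^s\|a^m-b^m$ with $s\ge 1$ and $p\nmid m$, then $p^{s+1}\|a^{pm}-b^{pm}$. To see this, write $A=a^m$, $B=b^m$, so $p^s\|A-B$ with $s\ge1$, and apply Lemma~\ref{L-Wee} with $x=A/B$ — more precisely, work with the factorization
\[
a^{pm}-b^{pm}=(a^m-b^m)\bigl(a^{m(p-1)}+a^{m(p-2)}b^m+\dots+b^{m(p-1)}\bigr).
\]
Since $a^m\equiv b^m\pmod p$ and $p\nmid b$ (as $\gcd(a,b)=1$ and $p\mid a-b$), the second factor is $\equiv p\cdot b^{m(p-1)}\equiv p\pmod{p^2}$ by Lemma~\ref{L-Wee} (applied to $x=a^mb^{-1}\bmod p^2$, which is $\equiv1\bmod p$, after clearing denominators by multiplying through by $b^{m(p-1)}$). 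Hence the second factor contributes exactly one more factor of $p$, giving the claim. Starting from $p^t\|a-b=a^{p^0}-b^{p^0}$ and iterating the claim $\ell$ times (legitimately, since at each stage $p^\ell$ or $p^{\ell-1}$ is coprime to $p$) yields $p^{t+\ell}\|a^{p^\ell}-b^{p^\ell}$, which is \eqref{E-ab}.

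For $p=2$, the subtlety — and the part I expect to be the main obstacle — is that $x^{p-1}+\dots+1=x+1$ when $p=2$, and $x+1$ need not be divisible by $2$ to exactly the first power: if $x$ is odd then $x+1$ is even, possibly highly divisible by $2$. This is exactly why the statement replaces $2^t\|a-b$ with $2^t\|\lcm(a-b,a+b)$. I would handle it by using the factorization $a^{2^\ell}-b^{2^\ell}=(a-b)(a+b)(a^2+b^2)(a^4+b^4)\cdots(a^{2^{\ell-1}}+b^{2^{\ell-1}})$; for $\ell\ge1$ each factor $a^{2^i}+b^{2^i}$ with $i\ge1$ is a sum of two odd squares, hence $\equiv2\pmod4$, contributing exactly one factor of $2$, while $\max(v_2(a-b),v_2(a+b))=t$ and $\min(v_2(a-b),v_2(a+b))=1$ (since $a-b$ and $a+b$ differ by $2b$, an odd multiple of $2$, so exactly one of them is $\equiv2\bmod4$). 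Adding up: $v_2(a^{2^\ell}-b^{2^\ell})=t+1+(\ell-1)=t+\ell$, giving \eqref{E-ab}. One must check the boundary case $\ell=1$ separately, where the product is just $(a-b)(a+b)$ and the count is $t+1$.

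Finally, if $p\nmid a-b$, then $a\not\equiv b\pmod p$, and since raising to the $p$-th power is a bijection on $\mathbb Z/p\mathbb Z$ (Fermat), $a^{p}\not\equiv b^{p}\pmod p$; iterating, $a^{p^\ell}\not\equiv b^{p^\ell}\pmod p$ for all $\ell\ge0$, which is \eqref{E-ab2}. This last case is immediate; the odd-$p$ case is routine given Lemma~\ref{L-Wee}; so the real work is the careful $2$-adic bookkeeping in the $p=2$ case.
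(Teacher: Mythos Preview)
Your argument is correct and follows essentially the same route as the paper: for odd $p$ use the factorization $a^p-b^p=(a-b)b^{p-1}(x^{p-1}+\dots+1)$ together with Lemma~\ref{L-Wee}, for $p=2$ count $2$-adic valuations via the telescoping product (the paper does this inductively, you do it all at once---equivalent), and for $p\nmid a-b$ invoke Fermat. One small slip to clean up: in your odd-$p$ lifting claim you include the hypothesis $p\nmid m$, which your proof never uses, and your parenthetical ``legitimately, since at each stage $p^\ell$ or $p^{\ell-1}$ is coprime to $p$'' is false for $\ell\ge 2$; simply drop the $p\nmid m$ hypothesis (the claim only needs $p^s\|A-B$ with $s\ge 1$ and $p\nmid B$) and the iteration is clean.
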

 
 \begin{proof}
 Put $x=a/b$. First suppose that $p$ is odd and $p^t\|a-b$ for some $t>0$. Then as $\gcd(a,b)=1$,  $b$ is not divisible by $p$, and we have $x\ep{1}{p^t}$.
 Then from 
\begin{equation}\label{E-ab3}
 a^p-b^p=(a-b)b^{p-1}(x^{p-1}+x^{p-2}+\dots+x+1)
 \end{equation} 
  we have by Lemma \ref{L-Wee} that $p^{t+1}\|a^p-b^p$. 
  Applying this result $\ell$ times, we obtain (\ref{E-ab}).
  
  For $p=2$, we have $p^{t+1}\|a^2-b^2$ and from $a^2\equiv b^2\ep{1}{8}$, we obtain  $2^1\|a^2+b^2$, and so $p^{t+2}\|a^4-b^4$. An easy induction then gives the required result.
 
 Now suppose that $p\nmid a-b$. Since $\gcd(a,b)=1$, (\ref{E-ab2}) clearly holds if $p\mid a$ or $p\mid b$, as must happen for $p=2$. So we can assume that $p$ is odd and $p\nmid b$. Then $x\not\equiv{1}\pmod{p}$ so that, by Lemma \ref{L-Wee} and 
 (\ref{E-ab3}), we have $p\nmid a^{p}-b^{p}$. Applying this argument $\ell$ times, we obtain (\ref{E-ab2}).
  \end{proof}

 For $n\in R^{(j)}_{a,b}$, we now define the set $\mathcal P^{(j)}_{a,b}(n)$ to be the set of all prime powers $p^k$ for which $np^k\in R^{(j)}_{a,b}$. Our next result describes this set precisely. (Compare with \cite[Theorem 1(a)]{S}).

 \begin{prop}\label{P-upj}
 Suppose that $j\ge 1$, $\gcd(a,b)=1$, $n\in R^{(j)}_{a,b}$ and
  \begin{equation}\label{E-upj}
a^n-b^n=2^{e'_2}\prod_{p>2} p^{e_p},\quad n=\prod_p p^{k_p}
 \end{equation} 
 and define $e_2$ by $2^{e_2}\|\lcm(a^n-b^n,a^n+b^n)$. Then
  \begin{equation}\label{E-p1}\mathcal P^{(1)}(n)=\bigcup_{p\mid a^n-b^n}\{p^k, k\in\N\},\end{equation} 
 and for $j\ge 2$
  \begin{equation}\label{E-pj}\mathcal P^{(j)}_{a,b}(n)=\bigcup_{p:p^{j-1}\mid a^n-b^n}\left\{p^k:1\le k\le \lfloor \frac{e_p-jk_p}{j-1}\rfloor\right\}.\end{equation}
 \end{prop}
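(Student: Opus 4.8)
The plan is to reduce membership of $np^k$ in $R^{(j)}_{a,b}$ to a single $p$-adic inequality, and then to evaluate the relevant valuation by iterating Lemma~\ref{L-ab}. Throughout one takes $k\ge1$ and, when $p=2$, reads $e_2$ for $e_p$ and replaces $a^n-b^n$ by $\lcm(a^n-b^n,a^n+b^n)$ (as in Lemma~\ref{L-ab} and Theorem~\ref{T-mainj}).

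First I would isolate the prime $p$. Since $n\in R^{(j)}_{a,b}$ and $n\mid np^k$, we have $a^n-b^n\mid a^{np^k}-b^{np^k}$, hence $n^j\mid a^{np^k}-b^{np^k}$. Therefore, for every prime $q\neq p$, $\ord_q((np^k)^j)=\ord_q(n^j)\le\ord_q(a^n-b^n)\le\ord_q(a^{np^k}-b^{np^k})$, so that $np^k\in R^{(j)}_{a,b}$ if and only if the single remaining condition
$$ j(k_p+k)\ \le\ \ord_p\bigl(a^{np^k}-b^{np^k}\bigr) $$
holds.

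Next I would evaluate that valuation using Lemma~\ref{L-ab}. Writing $n=p^{k_p}m$ with $p\nmid m$ and setting $A=a^m$, $B=b^m$, we have $\gcd(A,B)=1$ and $a^{np^k}-b^{np^k}=A^{p^{k_p+k}}-B^{p^{k_p+k}}$. If $p\nmid A-B$ — equivalently, by the second part of Lemma~\ref{L-ab}, $p\nmid a^n-b^n$ — the valuation is $0<j(k_p+k)$, so $np^k\notin R^{(j)}_{a,b}$ and $p$ is correctly absent from the stated unions (this also covers $p=2$ when $a$ or $b$ is even, where $e_2=0$). If $p\mid A-B$, let $t=\ord_p(A-B)$ for odd $p$ and $2^t\|\lcm(A-B,A+B)$ for $p=2$; then Lemma~\ref{L-ab} yields $\ord_p(a^{np^k}-b^{np^k})=t+k_p+k$, while the same lemma applied with exponent $k_p$ — using, for $p=2$, that $A^{2^{k_p}},B^{2^{k_p}}\equiv1\pmod 8$, so $\ord_2(A^{2^{k_p}}+B^{2^{k_p}})=1$ when $k_p\ge1$ — gives $t+k_p=e_p$. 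Hence $\ord_p(a^{np^k}-b^{np^k})=e_p+k$.

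Finally, combining these, $np^k\in R^{(j)}_{a,b}$ iff $p\mid a^n-b^n$ and $jk_p+(j-1)k\le e_p$. For $j=1$ this reads $k_p\le e_p$, which holds for every $p\mid a^n-b^n$ since $n\in R^{(1)}_{a,b}$; so $p^k\in\mathcal P^{(1)}(n)$ for all $k\ge1$, giving \eqref{E-p1}. For $j\ge2$ it reads $1\le k\le\lfloor(e_p-jk_p)/(j-1)\rfloor$, which has a solution $k\ge1$ exactly when $e_p\ge j-1$ (i.e. $p^{j-1}\mid a^n-b^n$), and then the admissible $k$ form precisely the stated range — this is \eqref{E-pj}. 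The one delicate point, and the only real obstacle, will be the $p=2$ bookkeeping: one must consistently pass to $\lcm(a^n-b^n,a^n+b^n)$, verify $A,B$ are odd so Lemma~\ref{L-ab} applies, and confirm the $\lcm$-exponent is indeed controlled by the minus term, as indicated above.
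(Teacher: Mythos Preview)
Your argument is correct and follows the same route as the paper's: reduce to the single $p$-adic inequality $j(k_p+k)\le \ord_p(a^{np^k}-b^{np^k})$, then use Lemma~\ref{L-ab} to show that this valuation equals $e_p+k$. The only cosmetic difference is that the paper applies Lemma~\ref{L-ab} directly to the pair $(a^n,b^n)$ --- so that $e_p$ itself plays the role of $t$ in the lemma and the formula $\ord_p=e_p+k$ is immediate --- whereas you factor $n=p^{k_p}m$ and apply the lemma to $(a^m,b^m)$, reaching the same conclusion after the extra identity $t+k_p=e_p$.
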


 Note that  $e_2$ is never $1$. Consequently, if $2m\in R^{(2)}_{a,b}$, where $m$ is odd, then $4m\in R^{(2)}_{a,b}$. Also, $2\in R^{(j)}_{a,b}$ for $j\le 3$ when $a-b$ is even.
  \begin{proof}
 Taking $n\in R^{(j)}_{a,b}$ we have, from (\ref{E-upj}) and the definition of $e_2$ that $jk_p\le e_p$ for all primes $p$. Hence, applying Lemma \ref{L-ab} with $a,b$ replaced by $a^n,b^n$  we have for $p$ dividing $a^n-b^n$ that for $\ell>0$
 \begin{equation}\label{E-up2}
p^{e_p+\ell}\|a^{np^\ell}-b^{np^\ell}.
 \end{equation}  
 So  $(np^\ell)^j\mid a^{np^\ell}-b^{np^\ell}$ is equivalent to $j(k_p+\ell)\le e_p+\ell$, or $(j-1)\ell\le e_p-jk_p$. Thus we obtain (\ref{E-p1}) for $j\ge 2$, with $\ell$ unrestricted for $j=1$, giving (\ref{E-p1}).
 
 On the other hand, if $p\nmid a^n-b^n$, then by  Lemma \ref{L-ab} again, $p^\ell\nmid a^{np^\ell}-b^{np^\ell}$, so that certainly $(np^\ell)^j\nmid a^{np^\ell}-b^{np^\ell}$.
  \end{proof}

We now recall some facts about the order function $\ord$. For $m$ an integer greater than $1$ and $x$ an integer prime to $m$, we define $\ord_m(x)$, the {\it order of $x$ modulo $m$}, to be the least positive integer $h$ such that $x^h\ep{1}{m}$.
The next three lemmas, containing standard material on the $\ord$ function, are included for completeness.
 \begin{lemma}\label{L-mx}
 For $x\in \mathbb N$ and prime to $m$ we have $m\mid x^n-1$ if and only if $\ord_m(x)\mid n$.
 \end{lemma}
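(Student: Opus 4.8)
The plan is to run the standard division-algorithm argument built on the defining property of $\ord$. Write $h=\ord_m(x)$; this is well defined because, among the $m+1$ residues $x^0,x^1,\dots,x^m$ modulo $m$, two must coincide, say $x^i\ep{x^j}{m}$ with $i<j$, and cancelling the factor $x^i$ (legitimate since $\gcd(x,m)=1$) gives $x^{j-i}\ep{1}{m}$ with $j-i>0$, so the set of positive $e$ with $x^e\ep{1}{m}$ is nonempty and has a least element.

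For the ``if'' direction, suppose $h\mid n$, say $n=hq$. Then $x^n=(x^h)^q\ep{1}{m}$ because $x^h\ep{1}{m}$, and hence $m\mid x^n-1$.

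For the ``only if'' direction, suppose $m\mid x^n-1$, i.e.\ $x^n\ep{1}{m}$. Divide $n$ by $h$, writing $n=hq+r$ with $0\le r<h$. Then $x^r\cdot(x^h)^q=x^n\ep{1}{m}$, and since $(x^h)^q\ep{1}{m}$ we deduce $x^r\ep{1}{m}$. By minimality of $h$ among positive integers $e$ with $x^e\ep{1}{m}$, the constraint $0\le r<h$ forces $r=0$, so $h\mid n$.

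There is essentially no obstacle: the only point worth a word is that in the main argument we never actually invert $x$ — it suffices to multiply through by the known power $(x^h)^q\ep{1}{m}$ — while the hypothesis $\gcd(x,m)=1$ is needed only to guarantee that $\ord_m(x)$ exists at all. The lemma is entirely routine and, as the surrounding text notes, is recorded here only for completeness.
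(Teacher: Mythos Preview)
Your proof is correct. The paper takes a slightly different route for the ``only if'' direction: instead of the division algorithm, it notes that from $m\mid x^h-1$ and $m\mid x^n-1$ one gets $m\mid x^{\gcd(h,n)}-1$, and then the minimality of $h$ forces $\gcd(h,n)=h$, i.e.\ $h\mid n$. Your argument is more self-contained, since the paper's step $m\mid x^{\gcd(h,n)}-1$ itself rests on the Euclidean algorithm (or the identity $\gcd(x^a-1,x^b-1)=x^{\gcd(a,b)}-1$), which is essentially what you unpack directly. The ``if'' direction is handled the same way in both.
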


 \begin{proof}
 Let $\ord_m(x)=h$, and assume that $m\mid x^n-1$. Then as $m\mid x^h-1$, also $m\mid x^{\gcd(h,n)}-1$. By the minimality of $h$, $\gcd(h,n)=h$, i.e., $h\mid n$. Conversely, if $h\mid n$ then $x^h-1\mid x^n-1$, so that $m\mid x^n-1$.
  \end{proof}
 
\begin{cor}\label{C-mx} Let $j\ge 1$. We have $n^j\mid x^n
-1$ if and only if $\gcd(x,n)=1$ and $\ord_{n^j
}(x)\mid n$.
\end{cor}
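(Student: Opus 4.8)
The plan is to read this off from Lemma~\ref{L-mx} applied with $m=n^{j}$; the only point requiring care is that Lemma~\ref{L-mx} presupposes $x$ to be prime to the modulus, so in the forward direction the coprimality of $x$ and $n$ must be extracted first. (For $n=1$ both conditions hold vacuously, so one may assume $n>1$ throughout.)

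For the forward implication, suppose $n^{j}\mid x^{n}-1$. If a prime $p$ divided both $x$ and $n$, then $p\mid n\mid n^{j}\mid x^{n}-1$ while also $p\mid x\mid x^{n}$, whence $p\mid 1$, which is impossible. Hence $\gcd(x,n)=1$, and therefore $\gcd(x,n^{j})=1$, so $\ord_{n^{j}}(x)$ is well defined. Lemma~\ref{L-mx}, with $m=n^{j}$, then converts the hypothesis $n^{j}\mid x^{n}-1$ into $\ord_{n^{j}}(x)\mid n$, giving the stated condition.

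Conversely, if $\gcd(x,n)=1$ then $\gcd(x,n^{j})=1$, so $\ord_{n^{j}}(x)$ is again defined; if moreover $\ord_{n^{j}}(x)\mid n$, Lemma~\ref{L-mx} gives $n^{j}\mid x^{n}-1$ immediately. There is no genuine obstacle here: the corollary is essentially a restatement of Lemma~\ref{L-mx}, combined with the elementary observation that the divisibility $n^{j}\mid x^{n}-1$ by itself already forces $\gcd(x,n)=1$.
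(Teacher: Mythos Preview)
Your argument is correct and is exactly what the paper intends: the corollary is stated without proof, as an immediate consequence of Lemma~\ref{L-mx} with $m=n^{j}$, together with the trivial observation that $n^{j}\mid x^{n}-1$ forces $\gcd(x,n)=1$. You have simply spelled this out.
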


 \begin{lemma}\label{L-lcm}
 For $m=\prod_p p^{f_p}$ and $x\in \mathbb N$ and prime to $m$ we have
  \begin{equation}\label{E-lcm}
\ord_m(x)=\lcm_p\ord_{p^{k_p}}(x).
 \end{equation} 
 \end{lemma}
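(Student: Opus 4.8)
The plan is to prove the identity by showing that $\ord_m(x)$ and $L:=\lcm_p\ord_{p^{f_p}}(x)$ divide exactly the same set of positive integers $n$; since each of them then divides the other, and both are positive, they must be equal. (Here $f_p$ denotes the exponents in the factorization $m=\prod_p p^{f_p}$; the $k_p$ appearing in the displayed formula is a misprint for $f_p$.)

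First I would record the elementary reduction that, because the prime powers $p^{f_p}$ are pairwise coprime, an integer $N$ is divisible by $m$ if and only if it is divisible by $p^{f_p}$ for every prime $p\mid m$ — this is the Chinese Remainder Theorem, or simply unique factorization. Applying it with $N=x^n-1$ shows that $m\mid x^n-1$ holds precisely when $p^{f_p}\mid x^n-1$ holds for all $p\mid m$.

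Next I would apply Lemma \ref{L-mx}, once for the modulus $m$ and once for each modulus $p^{f_p}$ (all of which are prime to $x$, since $m$ is). This converts the statement of the previous paragraph into the equivalence: $\ord_m(x)\mid n$ if and only if $\ord_{p^{f_p}}(x)\mid n$ for every $p\mid m$. By the defining property of the least common multiple, the right-hand condition is in turn equivalent to $L\mid n$. Hence $\ord_m(x)\mid n\iff L\mid n$ for every $n\in\N$.

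Finally, choosing $n=L$ gives $\ord_m(x)\mid L$, and choosing $n=\ord_m(x)$ gives $L\mid\ord_m(x)$, so $\ord_m(x)=L$, as required. There is no genuine difficulty in any of this: the argument is a routine assembly of Lemma \ref{L-mx} with the coprimality of the prime-power factors of $m$, and the only point worth stating explicitly is the closing remark that two mutually dividing positive integers must coincide.
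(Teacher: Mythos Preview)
Your proof is correct and follows essentially the same route as the paper: both arguments use Lemma~\ref{L-mx} together with the fact that $m\mid x^n-1$ iff each $p^{f_p}\mid x^n-1$, and then deduce mutual divisibility of $\ord_m(x)$ and $\lcm_p\ord_{p^{f_p}}(x)$. The only cosmetic difference is that the paper verifies the two divisibilities directly (checking $m\mid x^{h'}-1$ and each $p^{f_p}\mid x^{h}-1$), whereas you package this as the single equivalence $\ord_m(x)\mid n\iff L\mid n$ before specializing.
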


 \begin{proof}
Put $h_p=\ord_{p^{f_p}}(x)$, $h=\ord_m(x)$ and $h'=\lcm_p h_p$. Then by Lemma \ref{L-mx} we have $p^{f_p}\mid x^{h'}-1$ for all $p$, and hence $m\mid x^{h'}-1$. Hence $h \mid {h'}$. On the other hand, as $p^{f_p}\mid  n$ and $m\mid x^h-1$, we have $p^{f_p}\mid  x^h-1$, and so $h_p\mid h$, by Lemma \ref{L-mx}. Hence $h'=\lcm_p h_p\mid h$.
  \end{proof}

Now put $p_*=\ord_p(x)$, and define $t>0$ by $p^t\| x^{p_*}-1$.

 \begin{lemma}\label{L-ord}
For $\gcd(x,n)=1$ and $\ell>0$ we have  $p_*\mid p-1$ and $\ord_{p^\ell}(x)= p^{\max(\ell-t,0)} p_*$.
 \end{lemma}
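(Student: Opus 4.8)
The idea is to reduce the whole computation to the element $y:=x^{p_*}$, which satisfies $y\ep{1}{p}$ by the definition of $p_*$, so that Lemma \ref{L-ab}, applied with $(a,b)$ replaced by $(y,1)$, controls the exact power of $p$ dividing $y^{p^s}-1$. Throughout I take $p$ odd, which is the case that gets used; for $p=2$ the group $(\Z/2^\ell\Z)^\times$ is not cyclic and the analogous statement needs a separate elementary check. The claim $p_*\mid p-1$ is immediate: since $\gcd(x,p)=1$, Fermat's Little Theorem gives $p\mid x^{p-1}-1$, and Lemma \ref{L-mx} with $m=p$ then yields $\ord_p(x)=p_*\mid p-1$.

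Next I would determine $\ord_{p^\ell}(y)$. By the definition of $t$ we have $p\mid y-1$ with $p^t$ the exact power dividing $y-1$, so the first part of Lemma \ref{L-ab} (for $p>2$) gives $p^{t+s}\|y^{p^s}-1$ for every $s\ge 1$; for $s=0$ this is just the definition of $t$. Consequently $p^\ell\mid y^{p^s}-1$ exactly when $t+s\ge\ell$, i.e. when $s\ge\max(\ell-t,0)$. By Lemma \ref{L-mx}, $\ord_{p^\ell}(y)$ divides $p^{\max(\ell-t,0)}$, hence equals $p^u$ for some $u\le\max(\ell-t,0)$; and if $u$ were strictly smaller---which forces $\ell>t$ and $t+u<\ell$---then $p^\ell\mid y^{p^u}-1$ would contradict $p^{t+u}\|y^{p^u}-1$. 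Therefore $\ord_{p^\ell}(y)=p^{\max(\ell-t,0)}$.

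Finally I would pass from $y$ back to $x$. By definition $\ord_{p^\ell}(x)$ is the least $h>0$ with $p^\ell\mid x^h-1$; any such $h$ satisfies $p\mid x^h-1$, hence $p_*\mid h$ by Lemma \ref{L-mx}. So the admissible $h$ are precisely the numbers $p_*k$ with $p^\ell\mid x^{p_*k}-1=y^k-1$, and the smallest such $k$ is $\ord_{p^\ell}(y)$; thus $\ord_{p^\ell}(x)=p_*\,\ord_{p^\ell}(y)=p^{\max(\ell-t,0)}\,p_*$. The one point needing care is the bookkeeping with \emph{exact} powers (``$\|$'' rather than ``$\mid$'') in the middle paragraph, since this is what forces the lower bound $\ord_{p^\ell}(y)\ge p^{\ell-t}$ when $\ell>t$; the remaining steps are routine applications of Lemmas \ref{L-mx} and \ref{L-ab}.
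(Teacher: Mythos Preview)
Your proof is correct and follows essentially the same route as the paper's: both use Lemma~\ref{L-ab} to pin down the exact power of $p$ dividing $x^{p^s p_*}-1$, and Lemma~\ref{L-mx} to translate this into the order modulo $p^\ell$. Your device of setting $y=x^{p_*}$ and first computing $\ord_{p^\ell}(y)$ is just a clean repackaging of the paper's direct argument, and your caveat about $p=2$ is appropriate since the paper's definition of $t$ before the lemma does not match the $p=2$ case of Lemma~\ref{L-ab}.
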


 \begin{proof}
Since $p\mid x^{p-1}-1$, we have $p_*\mid p-1$, by Lemma \ref{L-mx}. 
Also, from $p^{\ell}\mid x^{\ord_{p^{\ell}}(x)}-1$ we have $p\mid x^{\ord_{p^{\ell}}(x)}-1$, and so, by  Lemma \ref{L-mx} again, $p_*=\ord_p(x)\mid \ord_{p^{\ell}}(x)$. Further, if $\ell\le t$ then from  $p^\ell\mid  x^{p_*}-1$ we have by Lemma \ref{L-mx} that $\ord_{p^\ell}(x)\mid p_*$, so $\ord_{p^\ell}(x)=p_*$.
Further,  by Lemma \ref{L-ab} for $u\ge t$ 
 \begin{equation}\label{E-tu}
 p^{u}\| x^{p^{u-t} p_*}-1,
\end{equation}  
  so that, taking $u=\ell\ge t$ and using Lemma \ref{L-mx}, $\ord_{p^{\ell}}(x)\mid p^{\ell-t} p_*$.
 Also, if $t\le u<\ell$, then, from (\ref{E-tu}), $x^{p^{t-u}p_*}\not\equiv{1}\pmod{p^{\ell}}$. Hence $\ord_{p^{\ell}}(x)= p^{\ell-t} p_*$ for $\ell\ge t$.
  \end{proof}

\begin{cor}\label{C-mm} 
Let $j\ge 1$. For $n=\prod_p p^{k_p}$ and $x\in \mathbb N$ and prime to $n$ we have $n^j\mid x^n-1$ if and only if $\gcd(x,n)=1$ and 
\begin{equation}\label{E-mm}
 \lcm_p p^{k'_p}p_*\mid \prod_p p^{k_p}.
\end{equation}   Here the $k'_p= \max(jk_p-t_p,0)$ are integers with  $t_p>0$.
\end{cor}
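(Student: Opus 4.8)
The plan is to assemble Corollary \ref{C-mm} directly from the order-function material already in place, with no new idea needed. First I would apply Corollary \ref{C-mx} with modulus $n^j$: it says that $n^j\mid x^n-1$ holds exactly when $\gcd(x,n)=1$ (equivalently $\gcd(x,n^j)=1$) and $\ord_{n^j}(x)\mid n$. So the task reduces entirely to computing $\ord_{n^j}(x)$ and then rewriting the divisibility $\ord_{n^j}(x)\mid n$ in terms of the data $k_p,t_p,p_*$.

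Next I would compute $\ord_{n^j}(x)$ in two moves. Writing $n^j=\prod_p p^{jk_p}$ and invoking Lemma \ref{L-lcm}, one gets $\ord_{n^j}(x)=\lcm_p\ord_{p^{jk_p}}(x)$, the product and lcm understood over the primes $p\mid n$ (for other primes the exponent is $0$ and the factor is trivial). Then Lemma \ref{L-ord}, applied with $\ell=jk_p>0$, evaluates each local order as $\ord_{p^{jk_p}}(x)=p^{\max(jk_p-t_p,\,0)}p_*=p^{k'_p}p_*$, where $p_*=\ord_p(x)$ and $t_p$ is defined by $p^{t_p}\|x^{p_*}-1$; here $t_p>0$ since $p\mid x^{p_*}-1$ by the definition of $p_*$, and $k'_p=\max(jk_p-t_p,0)$ is a non-negative integer. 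Substituting back yields $\ord_{n^j}(x)=\lcm_p p^{k'_p}p_*$.

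Combining the two steps, $n^j\mid x^n-1$ is equivalent to $\gcd(x,n)=1$ together with $\lcm_p p^{k'_p}p_*\mid\prod_p p^{k_p}$, which is exactly (\ref{E-mm}). Since each step is a direct citation of a lemma proved above, there is essentially no obstacle; the only points needing a little care are keeping the products and lcms indexed over the primes dividing $n$, so that each $\ord_{p^{jk_p}}(x)$ is defined, and noting that the standing hypothesis $\gcd(x,n)=1$ forces $p\nmid x$ for every such $p$, so that $p_*$ and $t_p$ are meaningful.
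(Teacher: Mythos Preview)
Your proposal is correct and is exactly the argument the paper intends: the corollary is stated without proof, and the implicit derivation is precisely the chain Corollary~\ref{C-mx} $\to$ Lemma~\ref{L-lcm} $\to$ Lemma~\ref{L-ord} that you carry out. There is nothing to add.
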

Note that $p_*$, $k'_p$ and $t_p$ in general depend on $x$ and $j$ as well as on $p$.

What we actually need in our situation is the following variant of Corollary \ref{C-mm}.
\begin{cor}\label{C-mmab} 
Let $j\ge 1$. For $n=\prod_p p^{k_p}$ and integers $a,b$ with $\gcd(a,b)=1$ we have $n^j\mid a^n-b^n$ if and only if $\gcd(n,a)=\gcd(n,b)=1$ and 
\begin{equation}\label{E-mmab}
 \lcm_p p^{k'_p}p_*\mid \prod_p p^{k_p}.
\end{equation}   Here the $k'_p= \max(jk_p-t_p,0)$ are integers with  $t_p>0$.
\end{cor}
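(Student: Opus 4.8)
The plan is to deduce this from Corollary~\ref{C-mm} by trading the pair $(a,b)$ for a single base $x$ congruent to $a/b$ modulo a suitable power of $n$. First I would check that the coprimality hypothesis is necessary: if a prime $p$ divides both $n$ and $a$, then since $\gcd(a,b)=1$ we have $p\nmid b$, so $a^n-b^n\equiv -b^n\not\equiv 0\pmod p$; hence $n\nmid a^n-b^n$, and a fortiori $n^j\nmid a^n-b^n$. The case $p\mid\gcd(n,b)$ is symmetric. So for the rest of the argument we may assume $\gcd(n,a)=\gcd(n,b)=1$, which in particular makes $b$ invertible modulo $n^j$.

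Next, choose a positive integer $x$ with $bx\equiv a\pmod{n^j}$; then automatically $\gcd(x,n)=1$ (since $\gcd(a,n)=1$), and raising to the $n$th power gives $b^nx^n\equiv a^n\pmod{n^j}$, whence
\begin{equation*}
a^n-b^n\equiv b^n(x^n-1)\pmod{n^j}.
\end{equation*}
As $\gcd(b,n)=1$, this shows $n^j\mid a^n-b^n$ if and only if $n^j\mid x^n-1$. Applying Corollary~\ref{C-mm} to $x$ then yields exactly condition~(\ref{E-mmab}), with $p_*=\ord_p(x)$, with $t_p$ defined by $p^{t_p}\|x^{p_*}-1$, and with $k'_p=\max(jk_p-t_p,0)$.

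It remains to observe that $p_*$ and $t_p$, hence $k'_p$, can be read off intrinsically from $a,b,n,j$, independently of the auxiliary choice of $x$. Since $x\equiv ab^{-1}\pmod p$, the order $p_*=\ord_p(x)$ is just the multiplicative order of $a/b$ in $(\Z/p\Z)^{\ast}$, which is well defined because $p\nmid ab$. For $t_p$, work $p$-adically: $x\equiv a/b \pmod{p^{jk_p}}$ gives $x^{p_*}-1\equiv (a/b)^{p_*}-1\pmod{p^{jk_p}}$, and $v_p\bigl((a/b)^{p_*}-1\bigr)=v_p\bigl(a^{p_*}-b^{p_*}\bigr)$ since $p\nmid b$. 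Hence if $v_p(a^{p_*}-b^{p_*})<jk_p$ then $t_p=v_p(a^{p_*}-b^{p_*})$, while if $v_p(a^{p_*}-b^{p_*})\ge jk_p$ then likewise $t_p\ge jk_p$ and $k'_p=0$ under either reading; so in all cases $k'_p=\max\bigl(jk_p-v_p(a^{p_*}-b^{p_*}),0\bigr)$, and $t_p>0$ because $p\mid a^{p_*}-b^{p_*}$ by the choice of $p_*$. (The degenerate case $a=b$, where $a^n-b^n$ always vanishes, corresponds to $x=1$ and is trivial.) The argument is essentially a direct translation of Corollary~\ref{C-mm}; the only point requiring a little care — and the one I would flag as the main, albeit minor, obstacle — is precisely this last step, namely verifying that passing from the rational $a/b$ to an integer representative $x$ modulo $n^j$ does not disturb the quantities $p_*$ and $t_p$ entering~(\ref{E-mmab}).
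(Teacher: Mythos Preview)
Your proof is correct and follows exactly the approach the paper takes: the paper's entire argument is the one-line remark ``choosing $x$ with $bx\equiv a\pmod{n^j}$'' to reduce to Corollary~\ref{C-mm}. You have simply expanded this into a careful verification, including the necessity of the coprimality condition and the well-definedness of $p_*$ and $k'_p$ independent of the choice of $x$, points the paper leaves implicit.
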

This corollary is easily deduced from the previous one by choosing $x$ with $bx\ep{a}{n^j}$.

 By contrast with Proposition \ref{P-upj}, our next proposition allows us to {\it divide} an element $n\in R^{(j)}_{a,b}$ by a prime, and remain within $R^{(j)}_{a,b}$.
 
 \begin{prop}\label{P-down}
 Let $n\in R^{(j)}_{a,b}$ with $n>1$, and suppose that $p_{\maxx}$ is the largest prime factor of $n$. Then $n/{p_\maxx}\in R^{(j)}_{a,b}$.
 \end{prop}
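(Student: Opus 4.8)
The plan is to use the multiplicative characterization of $R^{(j)}_{a,b}$ provided by Corollary~\ref{C-mmab}. Write $n=\prod_p p^{k_p}$ with $p_\maxx$ the largest prime dividing $n$, so $k_{p_\maxx}\ge 1$, and set $m=n/p_\maxx$, which has exponents $k'_p=k_p$ for $p\ne p_\maxx$ and exponent $k_{p_\maxx}-1$ at $p_\maxx$. Since $\gcd(n,a)=\gcd(n,b)=1$ we automatically have $\gcd(m,a)=\gcd(m,b)=1$. By Corollary~\ref{C-mmab} applied to $n$, the divisibility $n^j\mid a^n-b^n$ is equivalent to
\[
\lcm_p p^{\max(jk_p-t_p,0)}p_*\ \Big|\ \prod_p p^{k_p},
\]
where $p_*=\ord_p(a/b)$ and $p^{t_p}\|(a/b)^{p_*}-1$ (interpreting $a/b$ via a solution $x$ of $bx\equiv a$); crucially, $p_*$ and $t_p$ depend only on $p$, $a$, $b$, not on the exponent $k_p$ appearing in $n$. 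The goal is to show the analogous relation for $m$, namely $\lcm_p p^{\max(jk'_p-t_p,0)}p_*\mid \prod_p p^{k'_p}$.

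The key step is to compare, prime by prime, the $p$-adic valuation of the left side with that of the right side in the relation for $m$. For a prime $q\ne p_\maxx$ dividing $m$, the right-hand exponent $k'_q=k_q$ is unchanged; one must check that decreasing the exponent at $p_\maxx$ on the left cannot increase the needed $q$-contribution. The $q$-part of $\lcm_p p^{\max(jk_p-t_p,0)}p_*$ comes from two sources: the term $q^{\max(jk_q-t_q,0)}$, and the factor $p_*$ for primes $p$ whose order $p_*$ is divisible by $q$. Since $p_*\mid p-1<p$, only primes $p>q$ can contribute a factor of $q$ through $p_*$; in particular $p_\maxx$ could. Here is where one uses that $n$ itself lies in $R^{(j)}_{a,b}$: the $q$-adic valuation of $\lcm_p p^{\max(jk_p-t_p,0)}p_*$ is already $\le k_q$, and the only term on the left whose $q$-valuation changes when we pass from $n$ to $m$ is the one coming from the prime $p_\maxx$, via its contribution $p_{\maxx\,*}$ to the lcm — but this contribution is independent of the exponent $k_{p_\maxx}$, hence unchanged. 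For the prime $p_\maxx$ itself, the right-hand exponent drops from $k_{p_\maxx}$ to $k_{p_\maxx}-1$, so one must show the left-hand $p_\maxx$-valuation also drops by at least $1$; since $p_\maxx$ is the largest prime factor, no other prime $p$ in the product has $p_*$ divisible by $p_\maxx$ (again $p_*<p\le p_\maxx$), so the only $p_\maxx$-contribution to the left side is $p_\maxx^{\max(jk_{p_\maxx}-t_{p_\maxx},0)}$, and replacing $k_{p_\maxx}$ by $k_{p_\maxx}-1$ lowers $jk_{p_\maxx}-t_{p_\maxx}$ by $j\ge 1$, which is enough once one checks the $\max(\cdot,0)$ does not spoil the inequality — and this is guaranteed by the $n$-relation, which forces $jk_{p_\maxx}-t_{p_\maxx}\le k_{p_\maxx}$, i.e. the max is taken over a quantity that stays nonnegative, or if it hits $0$ the inequality is trivial.

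The main obstacle is bookkeeping the factor $p_*$ correctly: one needs the observation that, because $p_*\mid p-1$, a prime $q$ can receive a $q$-adic contribution to the lcm only from primes strictly larger than $q$, so that the contributions are ``upper triangular'' with respect to the ordering of primes — and this is exactly what makes removing the top prime $p_\maxx$ safe. I would state this as a short lemma or simply inline it. Once that triangularity is in hand, the verification of $\lcm_p p^{\max(jk'_p-t_p,0)}p_*\mid\prod_p p^{k'_p}$ reduces to checking each prime separately, with all the nontrivial content absorbed into the hypothesis $n\in R^{(j)}_{a,b}$, and then a final appeal to Corollary~\ref{C-mmab} in the reverse direction gives $n/p_\maxx=m\in R^{(j)}_{a,b}$.
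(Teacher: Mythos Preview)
Your proof is correct and follows essentially the same route as the paper: both invoke Corollary~\ref{C-mmab}, exploit the ``triangularity'' coming from $p_*\mid p-1$ to isolate the $p_\maxx$-contribution, and then check that decreasing $k_{p_\maxx}$ by~$1$ preserves the divisibility prime by prime. One small remark: you reuse the symbol $k'_p$ for the exponents of $m$, clashing with the paper's notation $k'_p=\max(jk_p-t_p,0)$ from Corollary~\ref{C-mmab}; this is harmless mathematically but worth renaming.
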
 
  \begin{proof}
  Suppose $n\in R^{(j)}_{a,b}$, so that (\ref{E-mm}) holds, with $x=a/b$, and put $q=p_\maxx$. Then, since for every $p$ all prime factors of  $p_*$ are less than $p$, the only possible term on the left-hand side that divides $q^{k_{q}}$ on the right-hand side is the term $q^{k'_{q}}$. Now reducing $k_{q}$ by $1$ will  reduce $k'_{q}$ by at least $1$, unless it is already $0$, when it does not change. In either case (\ref{E-mm}) will still hold with $n$ replaced by $n/{q}$, and so $n/{q}\in R^{(j)}_{a,b}$.
   \end{proof}

 Various versions and special cases of Proposition \ref{P-down} for $j=1$ have been known for some time, in the more general setting of Lucas sequences, due to  Somer \cite[Theorem 5(iv)]{MR1271392}, Jarden \cite[Theorem E]{MR0197383}, Hoggatt and
Bergum \cite{MR0349567}, Walsh \cite{Wa}, Andr\'e-Jeannin \cite{MR1131414} 
and  others. See also Smyth \cite[Theorem 3]{S}. 

   In order to work out for which $a,b$ the set $R^{(j)}_{a,b}$ is finite, we need the following classical result. Recall that $a^n-b^n$ is said to have a {\it primitive prime divisor} $p$ if the prime $p$ divides $a^n-b^n$ but does not divide $a^k-b^k$ for any $k$ with $1\le k<n$.

   \begin{theorem}[{{Zsigmondy \cite{MR1271392}}}]\label{S-Z} Suppose that $a$ and $b$  are  nonzero coprime integers with $a>b$ and $a+b>0$. Then, except when
   \begin{itemize}
   \item $n=2$ and $a+b$ is a power of $2$
   
   \noindent or
   \item $n=3$, $a=2$, $b=-1$
   
   \noindent or
   \item $n=6$, $a=2$, $b=1$,
   \end{itemize}
 $a^n-b^n$ has a primitive prime divisor.  
 \end{theorem}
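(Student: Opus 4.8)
\emph{Proof strategy (the classical cyclotomic argument).} The plan is to reproduce the proof of Birkhoff--Vandiver \cite{MR1503541}. For $n\ge 1$ set $\zeta_n=e^{2\pi i/n}$ and
\[
\Phi_n^*(a,b)=\prod_{\substack{1\le k\le n\\ \gcd(k,n)=1}}\bigl(a-\zeta_n^k b\bigr),
\]
the homogenisation of the $n$-th cyclotomic polynomial; each $\Phi_n^*(a,b)$ is a nonzero integer (nonzero because $\gcd(a,b)=1$ and $a+b>0$ exclude $a=\pm b$), and $a^n-b^n=\prod_{d\mid n}\Phi_d^*(a,b)$. I may assume $n\ge 2$, and I write $P(n)$ for the largest prime factor of $n$.

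The first step is a dichotomy for the prime divisors of $\Phi_n^*(a,b)$: \emph{if $p\mid\Phi_n^*(a,b)$ then either $p$ is a primitive prime divisor of $a^n-b^n$, or $p\mid n$; and in the latter case, if $n\ge 3$, then $p=P(n)$ and $p\,\|\,\Phi_n^*(a,b)$.} Indeed $p\nmid ab$ since $\gcd(a,b)=1$, so $e:=\ord_p(ab^{-1})$ is defined, $e\mid p-1$, and $e\mid n$ because $p\mid a^n-b^n$; if $e=n$ then $p$ is primitive, while if $e<n$ then applying Lemma~\ref{L-ab} with $(a,b)$ replaced by $(a^e,b^e)$ (i.e.\ lifting the exponent) shows $p\mid\Phi_m^*(a,b)$ only for $m\in\{e,ep,ep^2,\dots\}$ and $p\,\|\,\Phi_{ep^j}^*(a,b)$ for $j\ge 1$; hence $n=ep^j$ with $j\ge 1$, so $p\mid n$, every prime factor of $n$ is $\le p$, and for $n\ge 3$ this gives $p=P(n)$ and $p\,\|\,\Phi_n^*(a,b)$. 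Combined with the easy fact that every primitive prime divisor of $a^n-b^n$ divides $\Phi_n^*(a,b)$, this yields the characterisation: for $n\ge 3$, $a^n-b^n$ has no primitive prime divisor iff $|\Phi_n^*(a,b)|\in\{1,P(n)\}$; and for $n=2$, since $\Phi_2^*(a,b)=a+b$, iff $a+b$ is a power of $2$ (the first listed exception).

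It then remains to prove $|\Phi_n^*(a,b)|>P(n)$ for $n\ge 3$ outside the other two exceptions. From $|a-\zeta_n^k b|\ge a-|b|\ge 1$ we get $|\Phi_n^*(a,b)|\ge (a-|b|)^{\varphi(n)}$; if $a-|b|\ge 2$ this is $\ge 2^{\varphi(n)}$, which exceeds $P(n)$ for every $n\ge 3$ because $P(n)-1$ divides $\varphi(n)$. The delicate regime is $a-|b|=1$, i.e.\ $(a,b)=(b+1,b)$ with $b\ge 1$ or $(a,b)=(c+1,-c)$ with $c\ge 1$. Using $|a-\zeta b|^2=a^2-2ab\cos\theta+b^2$ one gets
\begin{gather*}
|\Phi_n^*(b+1,b)|^2=\prod_{\gcd(k,n)=1}\bigl(1+4b(b+1)\sin^2(\pi k/n)\bigr),\\
|\Phi_n^*(c+1,-c)|^2=\prod_{\gcd(k,n)=1}\bigl(1+4c(c+1)\cos^2(\pi k/n)\bigr),
\end{gather*}
and the factors of these products increase with $b$, respectively $c$; hence $|\Phi_n^*(b+1,b)|\ge\Phi_n(2)$ and $|\Phi_n^*(c+1,-c)|\ge|\Phi_n(-2)|$, so the whole problem reduces to the inequalities $|\Phi_n(\pm 2)|>P(n)$. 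These I would prove by an elementary lower bound for $\prod_{\gcd(k,n)=1}(x^2-2x\cos(2\pi k/n)+1)$ — the factors with $x\cos(2\pi k/n)\le 0$ being at least $x^2+1\ge 5$ — which beats $P(n)^2$ for every $n\ge 3$ apart from a short explicit list; checking that list directly, the only genuine failures are $n=6$ with $(a,b)=(2,1)$ and $n=3$ with $(a,b)=(2,-1)$, which together with the $n=2$ case found above exhaust the exceptions.

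I expect the main obstacle to be exactly this last size estimate in the tight case $|a|-|b|=1$: the bound $|\Phi_n(\pm 2)|>P(n)$ is essentially Bang's theorem, and, though elementary, it needs a careful lower bound for the cyclotomic product together with a finite but delicate analysis of small $n$ — it is precisely there that the three exceptions get pinned down. The remaining ingredients (the cyclotomic factorisation, the order computations, and the lifting-the-exponent step, for which Lemma~\ref{L-ab} is tailor-made) are routine.
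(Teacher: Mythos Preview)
The paper does not prove Zsigmondy's theorem at all: it simply states it as a classical result, cites Zsigmondy, and then applies it (in Proposition~\ref{P-Zap} and Theorem~\ref{T-finite}). So there is nothing in the paper to compare your argument against.

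That said, your sketch is the standard Birkhoff--Vandiver cyclotomic approach and is essentially sound. Two small points worth tightening. First, in the dichotomy step your use of Lemma~\ref{L-ab} is fine for odd $p$, but for $p=2$ the lifting-the-exponent behaviour is slightly different (the jump from $a-b$ to $a^2-b^2$ need not be by exactly one factor of $2$); you should note separately that for $n\ge 3$ a power of $2$, $\Phi_n^*(a,b)=a^{n/2}+b^{n/2}\equiv 2\pmod 4$, so still $2\,\|\,\Phi_n^*(a,b)$, and $|\Phi_n^*(a,b)|=2$ would force $a^2+b^2=2$, already excluded by the hypotheses. Second, the final inequality $|\Phi_n(\pm 2)|>P(n)$ is, as you acknowledge, exactly where the work lies; your heuristic about factors with $x\cos(2\pi k/n)\le 0$ is in the right spirit but is not yet a proof, and the explicit verification for small $n$ (which is where $n=3,6$ surface) has to be done carefully. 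None of this is a gap in your strategy, just places where the write-up would need more detail than a sketch allows.
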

 (Note that in this statement we have allowed $b$ to be negative, as did Zsigmony. His theorem is nowadays often quoted with the restriction $a>b>0$ and so has the second exceptional case omitted.)

 \section{Proof of Theorems \ref{T-main1} and \ref{T-mainj}}\label{S-proof} Let $n\in R^{(j)}_{a,b}$ have a factorisation (\ref{E-fact}), where $p_1<p_2<\dots<p_r$ and all $k_i>0$. First take $j\ge 1$. Then by Proposition \ref{P-down} $n/{p_r^{k_r}}=n_{r}\in R^{(j)}_{a,b}$, and hence $$(n/{p_r^{k_r}})/{p_{r-1}^{k_{r-1}}}=n_{r-1},\quad \dots,\quad  p_1^{k_1}=n_2,\quad 1=n_1$$ are 
 all in $R^{(j)}_{a,b}$.
 Now separate the two cases $j=1$ and $j\ge 2$ for  Theorems \ref{T-main1} and \ref{T-mainj} respectively. Now for $j=1$ Proposition \ref{P-upj} gives us that $p_i\mid a^{n_i}-b^{n_i}\,(i=1,\dots,r)$, while for $j\ge 2$ we have, again from Proposition \ref{P-upj}, that
 $$
   p_1^{(j-1)k_1} \text{ divides } \begin{cases} a-b  \qquad\qquad\quad\text{ if } p_1>2;\\ \lcm(a-b,a+b) \text{ if } p_1=2, \end{cases}
   $$ 
   and  
   $p_i^{(j-1)k_i}\mid a^{n_i}-b^{n_i}\, (i=2,\dots,r)$.
 Here we have used the fact that $\gcd(p_i,n_i)=1$, so that if $p_i^{k_i}\mid   (a^{n_i}-b^{n_i})/n_i^2$ then $p_i^{k_i}\mid a^{n_i}-b^{n_i}$ (i.e., we are applying Proposition \ref{P-upj} with all the exponents $k_p$ equal to $0$.)

  \section{Finding  $R^{(j)}_{a,b}$ when $\gcd(a,b)>1$.}\label{S-gto1}  
  
 For $a>1$, define the set $\mathcal F_a$ to be the set of all $n\in \mathcal N$ whose prime factors all divide $a$. To find $R^{(j)}_{a,b}$ in general, we first consider the case $b=0$.
 
 \begin{prop}\label{P-Ra0}
 We have $R^{(1)}_{a,0}=R^{(2)}_{a,0}=\mathcal F_a$, while for $j\ge 3$ the set $R^{(j)}_{a,0}=\mathcal F_a\setminus S^{(j)}_a$, where $S^{(j)}_a$ is a finite set.
 \end{prop}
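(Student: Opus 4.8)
The plan is to exploit that $b=0$ reduces the condition $n\in R^{(j)}_{a,0}$ to $n^j\mid a^n$, which is purely multiplicative. First I would record the inclusion $R^{(j)}_{a,0}\subseteq\mathcal F_a$, valid for all $j\ge 1$: if a prime $p$ divides $n$ and $n^j\mid a^n$, then $p\mid a^n$ forces $p\mid a$, so every prime factor of $n$ divides $a$. Conversely, for $n\in\mathcal F_a$ I would write $n=\prod_p p^{k_p}$ and $a=\prod_p p^{v_p}$, noting $v_p\ge 1$ whenever $p\mid n$; comparing $p$-adic valuations, $n^j\mid a^n$ is equivalent to the system of inequalities $jk_p\le nv_p$ for every $p\mid n$. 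Since $v_p\ge 1$ and $n\ge p^{k_p}\ge 2^{k_p}$, a sufficient condition for $n\in R^{(j)}_{a,0}$ is that $jk\le 2^{k}$ for each prime power $p^{k}\,\|\,n$.

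For $j=1$ and $j=2$ the inequality $jk\le 2^{k}$ holds for every $k\ge 1$ (immediate by induction), so this sufficient condition is automatic and $\mathcal F_a\subseteq R^{(j)}_{a,0}$; together with the first step this gives $R^{(1)}_{a,0}=R^{(2)}_{a,0}=\mathcal F_a$.

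For $j\ge 3$ I would set $S^{(j)}_a:=\mathcal F_a\setminus R^{(j)}_{a,0}$, so that $R^{(j)}_{a,0}=\mathcal F_a\setminus S^{(j)}_a$ by the above, and it remains only to bound $S^{(j)}_a$. If $n\in S^{(j)}_a$ there is a prime $p\mid n$ with $jk_p>nv_p$; using $v_p\ge 1$ and $n\ge p^{k_p}\ge 2^{k_p}$ this yields $2^{k_p}<jk_p$. Because $2^k/k\to\infty$ as $k\to\infty$, the set of $k\ge 1$ with $2^k<jk$ is finite, say contained in $\{1,\dots,K(j)\}$; hence $k_p\le K(j)$, and therefore $n<jk_p\le jK(j)$, so $S^{(j)}_a\subseteq\{1,\dots,jK(j)\}$ is finite. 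The whole argument is essentially a one-line inequality; the only point deserving attention is the passage from the true condition $jk_p\le nv_p$ to the coarser $jk_p\le 2^{k_p}$, since it is exactly the gap $2^k\ge jk$ — which fails for only finitely many $k$, and does fail for some $k$ precisely when $j\ge 3$ — that produces the finite exceptional set. In particular there is no real obstacle here, and none of the ABC-type difficulties arising in the case $\gcd(a,b)=1$ are present.
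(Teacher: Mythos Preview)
Your proof is correct and follows essentially the same approach as the paper: reduce $n^j\mid a^n$ to the valuation inequalities $jk_p\le n\,v_p(a)$, use $v_p(a)\ge 1$ together with $n\ge 2^{k_p}$ and the elementary bound $jk\le 2^k$ to settle $j=1,2$, and for $j\ge 3$ observe that failure forces $2^{k_p}<jk_p$, which happens for only finitely many $k_p$. Your finiteness step is marginally tidier than the paper's, since from $jk_p>nv_p\ge n$ you extract the explicit bound $n<jK(j)$ on $n$ itself, whereas the paper bounds instead the sum $\sum_i k_i$ of exponents; but the underlying idea is the same.
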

 \begin{proof}
 From the condition $n^j\mid a^n$, all prime factors of $n$ divide $a$, so 
 $R^{(j)}_{a,0}\subset\mathcal F_a$, say $R^{(j)}_{a,0}=\mathcal F_a\setminus S^{(j)}_a$. We need to prove that $S^{(j)}_a$ is finite. Suppose that $a=p_1^{a_1}\dots p_r^{a_r}$, with $p_1$ the smallest prime factor of $a$. Then $n=p_1^{k_1}\dots p_r^{k_r}$ for some $k_i\ge 0$. From $n^j\mid a^n$ we have 
 \begin{equation}\label{E-R1}
 k_i\le \frac{a_i}{j}p_1^{k_1}\dots p_r^{k_r}\quad(i=1,\dots,r).
 \end{equation}
For these $r$ conditions to be satisfied it is sufficient that 
\begin{equation}\label{E-R2}
 \sum_{i=1}^r k_i\le \frac{\min_{i=1}^r a_i}{j}p_1^{\sum_{i=1}^r k_i}.
 \end{equation}
 Now (\ref{E-R2}) holds if $j=1$ or $2$, as in this case, from the simple inequality $k\le 2^{k-1}$ valid for all $k\in\mathbb N$, we have
 \begin{equation}\label{E-R3}
 \sum_{i=1}^r k_i\le \frac12 2^{\sum_{i=1}^r k_i}\le \frac{\min_{i=1}^r a_i}{j}p_1^{\sum_{i=1}^r k_i}.
 \end{equation} 
 Hence $S^{(j)}_a$ is empty if $j=1$ or $2$. 
 
 Now take $j\ge 3$, and let $K=K^{(j)}_a$ be the smallest integer such that $Kp_1^{-K}\le 
 (\min_{i=1}^r a_i)/j$. Then (\ref{E-R2}) holds for $\sum_{i=1}^r k_i\ge K$, and $S^{(j)}_a$ is contained in the finite set $S''=\{n\in \mathbb N, n=p_1^{k_1}\dots p_r^{k_r}: \sum_{i=1}^r k_i<K\}$. (To compute $S^{(j)}_a$ precisely, one need just check for which $r$-tuples $(k_1,\dots,k_r)$ with 
 $\sum_{i=1}^r k_i<K$ any of the $r$ inequalities of (\ref{E-R1}) is violated.
 \end{proof}
 
 One (at first sight) curious consequence of the equality $R^{(1)}_{a,0}=R^{(2)}_{a,0}$ above is that $n\mid a^n$ implies $n^2\mid a^n$.

 Now let $g=\gcd(a,b)$ and $a=a_1g$, $b=b_1g$. Write $n=Gn_1$, where all prime factors of $G$ divide $g$ and $\gcd(n_1,g)=1$. Then we have the following general result.
 
 \begin{theorem}\label{T-general} The set $R^{(j)}_{a,b}$ is given by
 \begin{equation}\label{E-g4}
   R^{(j)}_{a,b}=\{n=Gn_1: G\in\mathcal F_g, n_1\in R^{(j)}_{a^G_1,b^G_1} \text{ and } \gcd(g,n_1)=1\}\setminus R,
    \end{equation}
  where $R$ is a finite set. Specifically, all $n=Gn_1\in R$ have $1\le n_1<j/2$ and \begin{equation}\label{E-q1}
  G=q_1^{\ell_1}\dots q_m^{\ell_m},
  \end{equation}
  where 
  \begin{equation}\label{E-q2}
  \sum_{i=1}^m\ell_i<K^{(j)}_{g^{n_1}}.
  \end{equation} 
  Here the $q_i$ are the primes dividing $g$, and $K^{(j)}_{g^{n_1}}$ is the constant in the proof of Proposition \ref{P-Ra0} above.
\end{theorem}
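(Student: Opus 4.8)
\emph{Proof plan.} The idea is to substitute $a^n-b^n=g^n(a_1^n-b_1^n)$ and to test the divisibility $n^j\mid a^n-b^n$ one prime at a time, separating the primes of $n$ into those dividing $g$ (which make up $G$) and those coprime to $g$ (which make up $n_1$); we may assume $g>1$, since for $g=1$ the statement reduces to $R^{(j)}_{a,b}=R^{(j)}_{a_1,b_1}$ with $a_1=a$, $b_1=b$ and $R=\emptyset$. For a prime $p\nmid g$ one has $v_p(a^n-b^n)=v_p(a_1^n-b_1^n)$ and $v_p(n)=v_p(n_1)$, so the conditions $j\,v_p(n)\le v_p(a^n-b^n)$ over all such $p$ are together equivalent to $n_1^j\mid a_1^n-b_1^n$; since $\gcd(n_1,g)=1$ and $a_1^n-b_1^n=(a_1^G)^{n_1}-(b_1^G)^{n_1}$, this is exactly the requirement $n_1\in R^{(j)}_{a_1^G,b_1^G}$. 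For a prime $p\mid g$ one has $v_p(n)=v_p(G)=:\ell_p$ and the condition becomes $j\ell_p\le n\,v_p(g)+v_p(a_1^n-b_1^n)$. In particular each $n\in R^{(j)}_{a,b}$ already lies in the set $\{n=Gn_1: G\in\mathcal F_g,\ n_1\in R^{(j)}_{a_1^G,b_1^G},\ \gcd(g,n_1)=1\}$ of \eqref{E-g4}, so, defining $R$ as the difference of that set and $R^{(j)}_{a,b}$, it remains to bound $R$.

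I would next dispose of the primes dividing $g$ by invoking Proposition \ref{P-Ra0}. Since $v_p(a_1^n-b_1^n)\ge 0$, a sufficient condition for all the $p\mid g$ inequalities is $j\ell_p\le n\,v_p(g)$ for every $p\mid g$, which is precisely $G^j\mid g^{Gn_1}=(g^{n_1})^G$, i.e. $G\in R^{(j)}_{g^{n_1},0}$. By Proposition \ref{P-Ra0}, $R^{(j)}_{g^{n_1},0}=\mathcal F_{g^{n_1}}\setminus S^{(j)}_{g^{n_1}}=\mathcal F_g\setminus S^{(j)}_{g^{n_1}}$ with $S^{(j)}_{g^{n_1}}$ finite, and from its proof every $G=q_1^{\ell_1}\cdots q_m^{\ell_m}$ in $S^{(j)}_{g^{n_1}}$ satisfies $\sum_i\ell_i<K^{(j)}_{g^{n_1}}$. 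Consequently, whenever $n=Gn_1$ belongs to the set of \eqref{E-g4} and $G\notin S^{(j)}_{g^{n_1}}$ one has $n\in R^{(j)}_{a,b}$; hence any $n\in R$ must have $G\in S^{(j)}_{g^{n_1}}$, giving the shape \eqref{E-q1}--\eqref{E-q2}.

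It then remains to bound the values of $n_1$ occurring in $R$. I claim $n_1<j/2$ for every $n=Gn_1\in R$. Indeed, suppose $n_1\ge j/2$, and let $p\mid g$ with $\ell_p\ge 1$. Then $n=Gn_1\ge p^{\ell_p}n_1\ge 2^{\ell_p}\cdot\tfrac{j}{2}=j\,2^{\ell_p-1}\ge j\ell_p$, using the elementary bound $k\le 2^{k-1}$ ($k\in\mathbb N$) already used in Proposition \ref{P-Ra0}; hence $j\ell_p\le n\le n\,v_p(g)$, so all the $p\mid g$ conditions hold, and by the previous paragraph $n=Gn_1\in R^{(j)}_{a,b}$, i.e. $n\notin R$. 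Therefore $R$ is contained in the finite set of $n=Gn_1$ with $1\le n_1<j/2$, $\gcd(n_1,g)=1$ and $G\in S^{(j)}_{g^{n_1}}$, which is exactly the asserted description.

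The argument is essentially a valuation bookkeeping exercise, and I do not expect a serious obstacle; the one point requiring a little care is the $p\mid g$ condition, where the exponent on the right is $n\,v_p(g)+v_p(a_1^n-b_1^n)$ rather than the clean $n\,v_p(g)$. Since, however, we only ever need $n\,v_p(g)$ as a \emph{lower} bound --- both to deduce sufficiency from Proposition \ref{P-Ra0} and to run the counting argument for $n_1\ge j/2$ --- the nonnegative correction term is harmless.
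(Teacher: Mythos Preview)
Your argument is correct and follows essentially the same route as the paper: factor $a^n-b^n=g^n(a_1^n-b_1^n)$, split the divisibility test prime-by-prime into the $p\nmid g$ part (giving $n_1\in R^{(j)}_{a_1^G,b_1^G}$) and the $p\mid g$ part (controlled via $G^j\mid g^{Gn_1}$ and Proposition~\ref{P-Ra0}). The only cosmetic difference is that for $n_1\ge j/2$ you verify the inequality $j\ell_p\le n\,v_p(g)$ directly via $n\ge 2^{\ell_p}n_1$, whereas the paper packages the same computation as ``$G^2\mid g^G$ since $R^{(2)}_{g,0}=\mathcal F_g$, hence $G^j\mid g^{Gn_1}$ when $2n_1\ge j$''.
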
 
 \begin{proof}
 supposing that $n\in R^{(j)}_{a,b}$ we have
 \begin{equation}\label{E-n2}
 n^j\mid a^n-b^n
 \end{equation}
 and so $n^j\mid g^n(a_1^n-b_1^n)$. Writing $n=Gn_1$, as above, we have 
      \begin{equation}\label{E-red1}
    n_1^j\mid (a_1^G)^{n_1}-(b_1^G)^{n_1}
    \end{equation}
     and 
     \begin{equation}\label{E-red2}
     G^j\mid g^{Gn_1}\left((a_1^G)^{n_1}-(b_1^G)^{n_1}\right).
     \end{equation}
     Thus (\ref{E-n2}) holds with $n,a,b$ replaced by $n_1,a_1^G,b_1^G$. So we have reduced the problem of (\ref{E-n2}) to a case where $\gcd(a,b)=1$, which we can solve for $n_1$ prime to $g$, along with the extra condition (\ref{E-red2}). Now, from the fact that $R^{(2)}_{g,0}=\mathcal F_g$
 from Proposition \ref{P-Ra0}, we have $G^2\mid g^G$ and hence $G^j\mid g^{Gn_1}$ for all $G\in \mathcal F_g$ , provided that $n_1\ge j/2$. Hence (\ref{E-red2}) can fail to hold for all $G\in \mathcal F_g$ only for $1\le n_1<j/2$. 
 
 Now fix $n_1$ with  $1\le n_1<j/2$. Then note that  by Proposition \ref{P-Ra0}, $G^j\mid g^{Gn_1}$ and hence (\ref{E-n2}) holds for all $G\in\mathcal F_{g^{n_1}}\setminus S$, where $S$ is a finite set of $G$'s contained in the set of all $G$'s given by (\ref{E-q1}) and (\ref{E-q2}).
 \end{proof}

 Note that (taking $n_1=1$ and using (\ref{E-red2})) we always have $R^{(j)}_{g,0}\subset R^{(j)}_{a,b}$. See example \ref{X-gcd2} in Section \ref{S-exs}.

 \section{When are $R^{(1)}_{a,b}$ and $R^{(2)}_{a,b}$ finite?}\label{S-finite}

First consider $R^{(1)}_{a,b}$. From Theorem \ref{T-main1} it is immediate that $R^{(1)}_{a,b}$  contains all powers of any primes dividing $a-b$. Thus $R^{(1)}_{a,b}$  is infinite unless $a-b=\pm 1$, in which case $R^{(1)}_{a,b}=\{1\}.$ This was pointed out earlier by  Andr{\'e}-Jeannin \cite[Corollary 4]{MR1131414}.

Next, take $j=2$.  Let us denote by $\mathcal P^{(2)}_{a,b}$ the set of primes that divide some $n\in R^{(2)}_{a,b}$ and, as before, put $g=\gcd(a,b)$.
 \begin{theorem}\label{T-finite} The set $R^{(2)}_{a,b}=\{1\}$ if and only if $a$ and $b$ are consecutive integers, and $R^{(2)}_{a,b}=\{1,3\}$ if and only if $ab=-2$. Otherwise, $R^{(2)}_{a,b}$ is infinite.
 
 If  $R^{(2)}_{a/g,b/g}=\{1\}$ (respectively, $=\{1,3\}$) then 
  $\mathcal P^{(2)}_{a,b}$ is the set of all prime divisors of $g$ (respectively, $3g$). Otherwise $\mathcal P^{(2)}_{a,b}$ is infinite.
 \end{theorem}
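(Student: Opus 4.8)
### Proof proposal for Theorem \ref{T-finite}

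The plan is to reduce everything to the case $\gcd(a,b)=1$ via Theorem \ref{T-general}, and then to handle the coprime case directly using Theorem \ref{T-mainj} together with Zsigmondy's Theorem (Theorem \ref{S-Z}). I would organise the argument in two stages: first settle the coprime case, then deduce the general statement about $\mathcal P^{(2)}_{a,b}$.

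\emph{Stage 1: the coprime case.} Assume $\gcd(a,b)=1$. By Theorem \ref{T-mainj} with $j=2$, an element $n=p_1^{k_1}\cdots p_r^{k_r}>1$ of $R^{(2)}_{a,b}$ requires a prime $p_r$, not among $p_1,\dots,p_{r-1}$, with $p_r \mid a^{n_r}-b^{n_r}$ where $n_r=p_1^{k_1}\cdots p_{r-1}^{k_{r-1}}$ (the case $r=1$ needing $p_1\mid a-b$, or $p_1=2$ dividing $\lcm(a-b,a+b)$). So I would first ask when $R^{(2)}_{a,b}$ even contains a prime $p$. This happens iff $p\mid a-b$ (for $p$ odd) or $2\mid\lcm(a-b,a+b)$ (for $p=2$). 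Hence $R^{(2)}_{a,b}=\{1\}$ exactly when $a-b=\pm1$ and $a+b$ is odd, i.e. $a$ and $b$ are consecutive integers. If $R^{(2)}_{a,b}\neq\{1\}$, pick the smallest prime $p\in R^{(2)}_{a,b}$. Now I want to show that unless we are in the $ab=-2$ exception, I can keep adjoining new primes, producing an infinite set. The engine for this is Zsigmondy: for any $n_i$ already built, $a^{n_i}-b^{n_i}$ has a primitive prime divisor $q$ (so $q\nmid a^k-b^k$ for $k<n_i$, in particular $q\nmid n_i$ since every prime factor $p'$ of $n_i$ divides $a^{p'_*}-b^{p'_*}$ with $p'_*<n_i$ — here one uses $p'_*\mid p'-1<p'\le n_i$), \emph{except} in the three listed exceptional cases. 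So generically we adjoin $q$ with exponent $1$ and iterate, giving infinitely many elements. I must then check the Zsigmondy exceptions do not obstruct this: the exception $n_i=2$, $a+b$ a power of $2$, only matters when $2\in R^{(2)}_{a,b}$, but if $2\in R^{(2)}$ then already $4\in R^{(2)}$ (by the remark after Proposition \ref{P-upj}, since $e_2\neq1$), and from $n_i=4$ Zsigmondy applies without exception; the exception $n_i=6$, $(a,b)=(2,1)$ is handled since $a-b=1$ forces $R^{(2)}_{2,1}=\{1\}$; the exception $n_i=3$, $(a,b)=(2,-1)$ is precisely the case $ab=-2$ — here $a-b=3$, so $3\in R^{(2)}$, hence $1,3\in R^{(2)}$, but $a^3-b^3=9$ has no primitive prime divisor, and one checks directly $9^2\nmid 2^9+1=513$ and that no further prime can be adjoined, giving $R^{(2)}_{2,-1}=\{1,3\}$. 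The symmetric case $(a,b)=(-1,2)$ reduces to this (or is excluded by $a>b$); more care is needed if we do not assume $a>b$, so I would first normalise using $a^n-b^n=-(b^n-a^n)$ for $n$ odd and note $R^{(2)}$ only involves the set of $n$ with $n^2\mid a^n-b^n$, symmetric under swapping when combined with parity bookkeeping; the clean statement $ab=-2$ captures exactly $\{a,b\}=\{2,-1\}$.

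\emph{Stage 2: the general case.} With $g=\gcd(a,b)$, Theorem \ref{T-general} gives $R^{(j)}_{a,b}$ as a union over $G\in\mathcal F_g$ of sets $Gn_1$ with $n_1\in R^{(j)}_{a_1^G,b_1^G}$, minus a finite set $R$; moreover $R^{(j)}_{g,0}=\mathcal F_g\subset R^{(j)}_{a,b}$ by the note after that theorem. For $j=2$ this is cleaner: $R^{(2)}_{g,0}=\mathcal F_g$ with no deletions. So every prime dividing $g$ lies in $\mathcal P^{(2)}_{a,b}$. For primes not dividing $g$: such a prime $p$ divides some $n\in R^{(2)}_{a,b}$ iff it divides some $n_1\in R^{(2)}_{a_1^G,b_1^G}$ for suitable $G$; since $\gcd(a_1,b_1)=1$ and raising to the power $G$ does not change whether $R^{(2)}=\{1\}$ or $\{1,3\}$ (consecutive integers: $a_1^G-b_1^G=\pm1$ forces $a_1-b_1=\pm1$ unless $G$ odd flips a sign, which one checks; $a_1b_1=-2$: then $a_1^Gb_1^G=\pm2^G$, so $R^{(2)}_{a_1^G,b_1^G}=\{1\}$ for $G\geq2$ and $\{1,3\}$ for $G=1$ — but $g=1$ in that case so this branch is vacuous), one concludes: if $R^{(2)}_{a/g,b/g}=\{1\}$ then the only $n_1$ available is $1$, so $\mathcal P^{(2)}_{a,b}$ consists exactly of the prime divisors of $g$; if $R^{(2)}_{a/g,b/g}=\{1,3\}$ then additionally $n_1=3$ is available (with $G=1$, since $g=1$ here, or more precisely the value $3$ appears), contributing the prime $3$, so $\mathcal P^{(2)}_{a,b}=\{\text{primes dividing }3g\}$; and otherwise $R^{(2)}_{a/g,b/g}$ is infinite by Stage 1, hence contains infinitely many primes, all of which lie in $\mathcal P^{(2)}_{a,b}$.

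\emph{Main obstacle.} The routine parts are the bookkeeping with $g$ and the direct verification for $(2,-1)$. The genuinely delicate step is Stage 1's induction: proving that a primitive prime divisor $q$ of $a^{n_i}-b^{n_i}$ is automatically coprime to $n_i$ (so that it is a genuinely \emph{new} prime and the hypothesis of Theorem \ref{T-mainj} is met), and checking that the three Zsigmondy exceptions never permanently block the process — the $n_i=2$ exception in particular forces one to pass to $n_i=4$ first, and one must confirm that $a^4-b^4$ has a primitive prime divisor in all relevant cases. Handling the sign conventions (allowing $b<0$, and the parity interplay when $n_i$ is even versus odd) so that the crisp hypotheses "consecutive integers" and "$ab=-2$" are exactly right also requires attention.
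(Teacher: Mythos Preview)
Your Stage~1 is essentially the paper's approach. The paper packages the Zsigmondy induction as a separate Proposition~\ref{P-Zap} (any $n\ge 4$ in $R^{(2)}_{a,b}$ forces both $R^{(2)}_{a,b}$ and $\mathcal P^{(2)}_{a,b}$ to be infinite), then does a short case split on the parity and size of $a-b$ (even; $=1$; odd $\ge 5$; $=3$) to produce such an $n\ge 4$ or fall into the two exceptions. You instead start from a prime $p\in R^{(2)}_{a,b}$ and handle Zsigmondy's exceptional $n$'s inline, but the engine is identical: Zsigmondy, Proposition~\ref{P-upj} to adjoin the primitive prime, and the check that a primitive divisor is coprime to the current $n$ (the paper does this via Proposition~\ref{P-down}, which is cleaner than your $p'_*$ sketch).

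Your Stage~2, however, contains a genuine error. The claim that ``raising to the power $G$ does not change whether $R^{(2)}=\{1\}$'' is false: take $a_1=2$, $b_1=1$, $G=2$. Then $R^{(2)}_{2,1}=\{1\}$, but $a_1^G-b_1^G=3$, so $3\in R^{(2)}_{4,1}$. Concretely $4^6-2^6=4032=2^6\cdot 3^2\cdot 7$ is divisible by $36$, hence $6\in R^{(2)}_{4,2}$ and $3\in\mathcal P^{(2)}_{4,2}$ despite $R^{(2)}_{a/g,\,b/g}=R^{(2)}_{2,1}=\{1\}$. So your route through Theorem~\ref{T-general} cannot give the second paragraph as stated. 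Note that the paper's written proof does not actually treat this clause either: the entire argument is carried out under the standing assumption $\gcd(a,b)=1$, in which case $g=1$ and the second paragraph collapses to the first (primes dividing $g=1$ is the empty set, primes dividing $3g=3$ is $\{3\}$, matching $R^{(2)}=\{1\}$ and $\{1,3\}$ respectively).
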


The application of  Zsigmondy's Theorem that we require is the following.

\begin{prop} \label{P-Zap}If $R^{(2)}_{a,b}$ contains some integer $n\ge 4$ then both $R^{(2)}_{a,b}$ and $\mathcal P^{(2)}_{a,b}$ are infinite sets.
\end{prop}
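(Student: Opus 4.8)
The plan is to use Proposition \ref{P-down} to reduce to the case of a prime power, then apply Zsigmondy's Theorem to manufacture infinitely many new primes, each giving a new element of $R^{(2)}_{a,b}$. First I would reduce to the coprime case: if $g=\gcd(a,b)>1$, then by Theorem \ref{T-general} (or directly) one checks $R^{(2)}_{a/g,b/g}$ also contains an element $\ge 4$ unless $g$ itself carries the size, and in any case $R^{(2)}_{g,0}=\mathcal F_g\subset R^{(2)}_{a,b}$ is already infinite once $g>1$; so assume $\gcd(a,b)=1$. We may also assume $a>|b|$ by symmetry ($a^n-b^n$ and $b^n-a^n$ have the same prime divisors, and for $a<0$ one reduces to the positive case after handling signs), and after these normalisations the hypotheses of Theorem \ref{S-Z} (Zsigmondy) are met.

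Next, starting from $n\ge 4$ in $R^{(2)}_{a,b}$, repeatedly apply Proposition \ref{P-down} to strip off the largest prime factor; since $n\ge 4$, at some stage we reach an element $m\in R^{(2)}_{a,b}$ with $m\ge 4$ that is either a prime power $p^k$ with $p^k\ge 4$, or we simply keep $n$ itself. Either way we have some $m\in R^{(2)}_{a,b}$ with $m\ge 3$; fix such an $m$. Now for any prime $q\nmid 2m\,ab$ with $q\mid a^m-b^m$ and $q$ larger than every prime factor of $m$, I claim $qm\in R^{(2)}_{a,b}$: indeed, by Proposition \ref{P-upj} (equation (\ref{E-pj}) with $j=2$), $q^1\in\mathcal P^{(2)}_{a,b}(m)$ provided $q\mid a^m-b^m$ and $\lfloor e_q - 2k_q\rfloor\ge 1$; since $q\nmid m$ we have $k_q=0$, so the condition is just $q\mid a^m-b^m$ (then automatically $e_q\ge 1$). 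This produces a new element $qm\in R^{(2)}_{a,b}$, strictly larger than $m$, with a new prime $q$ in $\mathcal P^{(2)}_{a,b}$.

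It then remains to produce infinitely many such primes $q$. Here Zsigmondy's Theorem is the engine: for each sufficiently large $N$, the number $a^{N}-b^{N}$ has a primitive prime divisor $q_N$, which does not divide $a^k-b^k$ for any $k<N$; in particular $q_N$ does not divide $a-b$ nor $ab$, and (since $q_N\mid a^{q_N-1}-b^{q_N-1}$ forces the order of $a/b$ mod $q_N$ to equal $N$ and divide $q_N-1$) we get $q_N\equiv 1\pmod N$, so $q_N>N$ and in particular $q_N$ exceeds every prime factor of $m$ once $N>m$. Applying the previous paragraph with the successive values $N=m, 2m, 4m,\dots$ — more carefully, with $N$ ranging over multiples of $m$ so that $m\mid N$ and hence $q_N\mid a^m - b^m$ fails; instead I take $N$ itself and use $m=N$ in the construction — gives infinitely many distinct primes in $\mathcal P^{(2)}_{a,b}$ and infinitely many elements of $R^{(2)}_{a,b}$.

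The main obstacle is the bookkeeping in the last step: one must choose the sequence of moduli so that the primitive prime divisor genuinely divides $a^{m}-b^{m}$ for the \emph{current} base point $m$ in $R^{(2)}_{a,b}$. The clean way is iterative: having $m_i\in R^{(2)}_{a,b}$, use Zsigmondy on $a^{m_i}-b^{m_i}$ itself — but that index is not new. The correct device is to note that $\mathcal P^{(2)}_{a,b}$ contains $1$'s worth of slack: for \emph{any} prime $p$ with $p\mid a^{m}-b^{m}$, $p>m$, we get $pm\in R^{(2)}_{a,b}$, and $a^{m}-b^{m}$ has at least one prime factor exceeding $m$ as soon as $a^{m}-b^{m}>m$, which certainly holds for $m\ge 3$ (and for $|a|,|b|\ge 1$, $a\ne b$). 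Iterating $m\mapsto pm$ with a freshly chosen large prime factor at each stage yields a strictly increasing sequence in $R^{(2)}_{a,b}$ using infinitely many distinct primes, and Zsigmondy is invoked only to guarantee that $a^{m}-b^{m}$ keeps acquiring primes not yet used — equivalently, that the set of prime divisors of $\{a^{k}-b^{k}\}_{k\ge1}$ is infinite, which is immediate from Zsigmondy. This completes the argument.
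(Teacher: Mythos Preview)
Your proposal circles the correct argument several times but never lands it cleanly, and the paragraph you present as the completion contains an actual error.

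The false step is the claim that ``$a^{m}-b^{m}$ has at least one prime factor exceeding $m$ as soon as $a^{m}-b^{m}>m$''. Size alone does not prevent $m$-smoothness; this implication is simply wrong. What \emph{is} true --- and what you already proved in your fourth paragraph via the order argument --- is that a \emph{primitive} prime divisor $q$ of $a^{m}-b^{m}$ satisfies $q\equiv 1\pmod m$, hence $q>m$, hence $q\nmid m$. That is precisely the fact you need, and it comes from Zsigmondy, not from a size comparison. Your final ``equivalently'' compounds the problem: the infinitude of primes dividing $\bigcup_{k}(a^{k}-b^{k})$ is strictly weaker than what the iteration requires, namely that for the \emph{current} $m$ the number $a^{m}-b^{m}$ has a prime factor not dividing $m$.

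The paper's proof is essentially what you sketched in paragraphs~3--4 before you abandoned it. Starting from $n\in R^{(2)}_{a,b}$ with $n\ge 4$ (no preliminary reduction via Proposition~\ref{P-down} is needed, and your reduction to $m\ge 3$ actually risks the Zsigmondy exception at $m=3$), one first observes that $a-b=\pm 1$ would force $R^{(2)}_{a,b}=\{1\}$, so the exceptional case $(a,b)=(2,1)$, $n=6$ cannot arise. Zsigmondy then gives a primitive prime $p\mid a^{n}-b^{n}$. To see $p\nmid n$, the paper uses Proposition~\ref{P-down}: if $p\mid n$, strip off largest primes until $p$ is maximal in some $n'\in R^{(2)}_{a,b}$, then $n''=n'/p\in R^{(2)}_{a,b}$ and Proposition~\ref{P-upj} forces $p\mid a^{n''}-b^{n''}$ with $n''<n$, contradicting primitivity. (Your order argument $p\equiv 1\pmod n$ is a perfectly good alternative here.) Then $np\in R^{(2)}_{a,b}$ by Proposition~\ref{P-upj}, and one iterates with $np$ in place of $n$, obtaining $n,\,np,\,npp_{1},\,\dots$ with $p<p_{1}<p_{2}<\cdots$. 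You had all the pieces; you just needed to assemble them once and stop.
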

\begin{proof} First note that if $a=2$, $b=1$ (or more generally $a-b=\pm 1$) then by Theorem \ref{T-mainj}, $R^{(2)}=\{1\}$. Hence, taking $n\in R^{(2)}_{a,b}$ with $n\ge 4$ we have, by Zsigmondy's Theorem, that $a^n-b^n$ has a primitive prime divisor, $p$ say. Now if $p\mid n$ then, by applying Proposition \ref{P-down} as many times as necessary we find $p\mid n'$, where $n'\in R^{(2)}_{a,b}$ and now $p$ is the maximal prime divisor of $n'$. Hence, by Proposition \ref{P-down} again, ${n''}=n'/p\in R^{(2)}_{a,b}$ and so, from $n'=p{n''}$ and Proposition \ref{P-upj} we have that $p\mid a^{{n''}}-b^{{n''}}$, contradicting the primitivity of $p$. 

Now using Proposition \ref{P-upj} again, $np\in R^{(2)}_{a,b}$. Repeating the argument with $n$ replaced by $np$ and continuing in this way we obtain an infinite sequence $n, np, npp_1,npp_1p_2, \dots, npp_1p_2\dots p_\ell, \dots$ of elements of $R^{(2)}_{a,b}$, where $p<p_1<p_2<\dots <p_\ell<\dots$ are primes.
\end{proof}

   \begin{proof}[Proof of Theorem \ref{T-finite}]
Assume $\gcd(a,b)=1$, and, without loss of generality, that $a>0$ and $a>b$.
(We can ensure this by interchanging $a$ and $b$ and/or changing both their signs.)
If $a-b$ is even, then $a$ and $b$ are odd, and $a^2-b^2\ep{1}{2^{t+1}}$, where $t\ge 2$. Hence $4\in R^{(2)}_{a,b}$, by  Proposition \ref{P-upj}, and so both $R^{(2)}_{a,b}$ and $\mathcal P^{(2)}_{a,b}$ are infinite sets, by Proposition \ref{P-Zap}.

 If $a-b=1$  then $R^{(2)}=\{1\}$, as we have just seen, above.

If $a-b$ is odd and  at least $5$, then $a-b$ must either be divisible by $9$ or by a prime  $p\ge 5$. Hence $9$ or $p$ belong to $R^{(2)}_{a,b}$, by Proposition \ref{P-upj}, and again both $R^{(2)}_{a,b}$ and $\mathcal P^{(2)}_{a,b}$ are infinite sets, by Proposition \ref{P-Zap}.

If $a-b= 3$ then $3\in R^{(2)}_{a,b}$, and $a^3-b^3=9(b^2+3b+3)$. If $b=-1$ (and $a=2$, $ab=-2$) or $-2$ (and $a=1$, $ab=-2$) then   $a^3-b^3=9$ and so, by Theorem \ref{T-mainj}, so $R^{(2)}=\{1,3\}$. Otherwise, using $\gcd(a,b)=1$ we see that $a^3-b^3\ge 5$, and so the argument for $a-b\ge 5$ but with $a,b$ replaced by $a^3,b^3$ applies.
\end{proof}

   \section{The powers of $n$ dividing  $a^n+b^n$}\label{S-plus}
   
   Define $R^{(j)+}_{a,b}$ to be the set $\{n\in\N: n^j \text{ divides } a^n+b^n\}$.
   Take $j\ge 1$, and assume that  $\gcd(a,b)=1$. (The general case $\gcd(a,b)\ge 1$ can be handled as in Section \ref{S-gto1}.) 
    We then have the following result.
    
 \begin{theorem}\label{T-plus} Suppose that $j\ge 1$, $\gcd(a,b)=1$, $a>0$ and $a\ge|b|$. Then   \begin{itemize}
   \item[(a)]  $R^{(1)+}_{a,b}$ consists of the odd elements of $R^{(1)}_{a,-b}$, along with the numbers of the form $2n_1$, where $n_1$ is an odd element of $R^{(1)}_{a^2,-b^2}$;
\item[(b)] If $j\ge 2$  the set $R^{(j)+}_{a,b}$ consists  of odd elements of $R^{(j)}_{a,-b}$ only . 
\end{itemize}
Furthermore, for $j=1$ and $2$, the set $R^{(j)+}_{a,b}$ is infinite, except in the following cases:
  \begin{itemize}
   \item If  $a+b$ is $1$ or  a power of $2$, $(j,a,b)\ne (1,1,1)$, when it is $\{1\}$;
\item $R^{(1)+}_{1,1}=\{1,2\}$;
\item $R^{(2)+}_{2,1}=\{1,3\}$.
\end{itemize}
 \end{theorem}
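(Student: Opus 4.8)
The plan is to reduce every question about $R^{(j)+}_{a,b}$ to the corresponding question about $R^{(j)}_{a,b}$, already answered, by splitting $n$ according to its parity. The elementary starting point is that if $n$ is odd then $b^n=-(-b)^n$, so $a^n+b^n=a^n-(-b)^n$; hence an odd $n$ lies in $R^{(j)+}_{a,b}$ if and only if $n\in R^{(j)}_{a,-b}$. This identifies the odd part of $R^{(j)+}_{a,b}$ at once, for every $j$.

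Next I would treat even $n$. Suppose $n\in R^{(j)+}_{a,b}$ is even. Since $\gcd(a,b)=1$, $a$ and $b$ cannot both be even; and if exactly one of them is even then $a^n+b^n$ is odd, contradicting $2\mid n\mid a^n+b^n$. So $a$ and $b$ are both odd, whence $a^2\equiv b^2\ep{1}{8}$ and therefore, $n$ being even, $a^n+b^n\ep{2}{8}$, i.e. $2\|a^n+b^n$. Writing $n=2^sm$ with $m$ odd, divisibility of $a^n+b^n$ by $2^{sj}$ forces $sj\le 1$, so $j=1$ and $s=1$; this already gives part (b). For $j=1$ we have $n=2m$ with $m$ odd, and since $\gcd(2,m)=1$ while $2\mid a^{2m}+b^{2m}$ holds automatically ($a,b$ being odd), $2m\mid a^{2m}+b^{2m}\iff m\mid a^{2m}+b^{2m}$; as $m$ is odd, $a^{2m}+b^{2m}=(a^2)^m-(-b^2)^m$, so the condition is exactly $m\in R^{(1)}_{a^2,-b^2}$. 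This is part (a) (the even part being vacuous unless $a$ and $b$ are both odd, since otherwise $a^2+b^2$ is odd).

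For the finiteness assertions ($j=1,2$) I would feed parts (a), (b) into Theorems \ref{T-main1}, \ref{T-mainj}, \ref{T-finite} and Zsigmondy's Theorem (Theorem \ref{S-Z}). Consider the odd elements of $R^{(j)}_{a,-b}$. By Theorems \ref{T-main1} and \ref{T-mainj}, any odd $n>1$ there has least (odd) prime factor $p_1$ with $p_1$ — respectively $p_1^{(j-1)k_1}$ — dividing $a-(-b)=a+b$, while conversely any odd prime $p\mid a+b$ already gives $p\in R^{(j)}_{a,-b}$. Under the normalisation $a>0\ge|b|$ we have $a+b\ge 0$, so ``$a+b$ has no odd prime divisor'' means $a+b$ equals $0$, $1$, or a power of $2$; the case $a+b=0$ forces $(a,b)=(1,-1)$, where $R^{(j)}_{1,1}=\N$ makes $R^{(j)+}_{1,-1}$ infinite, and otherwise $1$ is the only odd element of $R^{(j)}_{a,-b}$. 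If instead some odd prime $p$ divides $a+b$: for $j=1$ all powers $p^k$ lie in $R^{(1)}_{a,-b}$, giving infinitely many odd elements; for $j=2$ I would apply Zsigmondy's Theorem to $a^p+b^p=a^p-(-b)^p$ to get a primitive prime divisor $q$, necessarily odd (it does not divide $a+b$, and it is not $2$ because $p$ is odd, so $2\mid a^p+b^p$ would entail $2\mid a+b$) and distinct from $p$, whence $pq\in R^{(2)}_{a,-b}$ by Proposition \ref{P-upj}; iterating — the exponents reached are $\ge 15$, so no Zsigmondy exception interferes, the only near-miss being $n=3$ with $(a,-b)=(2,-1)$, i.e. $(a,b)=(2,1)$, where $a+b=3$, $R^{(2)}_{2,-1}=\{1,3\}$ and hence $R^{(2)+}_{2,1}=\{1,3\}$ — yields infinitely many odd elements. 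Finally the even part (only $j=1$, $a,b$ both odd): the same dichotomy applied to $R^{(1)}_{a^2,-b^2}$ shows it is infinite iff $a^2+b^2$ has an odd prime divisor, which since $a^2+b^2\ep{2}{8}$ fails only for $a^2+b^2=2$, i.e. $(a,b)=(1,\pm1)$, where the even part is just $\{2\}$. Collecting these cases yields the finiteness conclusions, in particular $R^{(1)+}_{1,1}=\{1\}\cup\{2\}=\{1,2\}$ and $R^{(2)+}_{2,1}=\{1,3\}$.

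I expect the real work to be in the last paragraph: matching the condition ``$1$ is the only odd element of $R^{(j)}_{a,-b}$'' with the concrete arithmetic statement about $a+b$ under the sign normalisation, and running the Zsigmondy bootstrap carefully enough that it stays inside the odd elements and isolates exactly the finitely many exceptional pairs $(a,b)$ — in particular the lone $j=2$ exception $(a,b)=(2,1)$. The $j=1$ even-part contribution, which has no analogue in the ``$a^n-b^n$'' theory, is the one genuinely new ingredient and must be woven into this exceptional-case bookkeeping.
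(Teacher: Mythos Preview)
Your proof is correct and follows essentially the same route as the paper's: split $n$ by parity, use that $a^n+b^n\equiv 1$ or $2\pmod 8$ to exclude $4\mid n$ (and all even $n$ when $j\ge 2$), reduce odd $n$ to $R^{(j)}_{a,-b}$ and $n=2m$ ($m$ odd, $j=1$) to $R^{(1)}_{a^2,-b^2}$, then settle finiteness via the structure theorems and Zsigmondy. The only difference is packaging: where the paper simply invokes Theorem~\ref{T-finite} ``and its proof'' for the $j=2$ infinitude (i.e.\ Proposition~\ref{P-Zap}), you re-run the Zsigmondy bootstrap explicitly inside the odd elements, which is the same argument written out. Your parenthetical noting that the even contribution in part~(a) is vacuous unless $a,b$ are both odd is a point the paper leaves implicit.
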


  \begin{proof}
      If $n$ is even and $j\ge 2$, or if $4\mid n$ and $j=1$, then $n^j\mid a^n+b^n$ implies that $4\mid a^n+b^n$, contradicting the fact that,  as $a$ and $b$ are not both even, $a^n+b^n\equiv{1}$ or $2\pmod{8}$. So either
   \begin{itemize}
   \item  $n$ is odd, in which case $n^j\mid a^n+b^n$ is equivalent to finding the odd elements of the set $R^{(j)}_{a,-b}$;
   
   \noindent\noindent or
   \item $j=1$ and $n=2n_1$, where $n_1$ is odd, and belongs to $R^{(1)}_{a^2,-b^2}$. 
  \end{itemize} 
   Now suppose that $j=1$ or $2$. If $a+b$ is $\pm 1$ or $\pm$ a power of $2$, then, by Theorem \ref{T-mainj}, all $n\in   R^{(j)}_{a,-b}$ with $n>1$ are even, so for $j= 2$ there are no $n>1$ with $n^j\mid a^n+b^n$ in this case. Otherwise, $a+b$ will have an odd prime factor, and so at least one odd element $>1$. By Theorem \ref{T-finite} and its proof, we see that $R^{(2)}_{a,-b}$ will have infinitely many odd elements unless $a(-b)=-2$, i.e. $a=2$, $b=1$ (using $a>0$ and $a\ge|b|$).

   For $j=1$, there will be infinitely many  $n$ with $n\mid a^n+b^n$, except when both $a+b$ and $a^2+b^2$ are $1$ or  a power of $2$. It is an easy exercise to check that,  this can happen only for $a=b=1$ or $a=1$, $b=0$.
     \end{proof}
     
     If $g=\gcd(a,b)>1$, then, since $R^{(j)+}_{a,b}$ contains the set 
  $R^{(j)}_{g,0}$, it will be infinite, by Proposition \ref{P-Ra0}.     
 For $j\ge 3$ and $\gcd(a,b)=1$, the finiteness of the set $R^{(j)+}_{a,b}$ would follow from the finiteness of $R^{(j)}_{a,b}$, using Theorem \ref{T-finite}(b).

\section{Examples.}\label{S-exs}
The set $R^{(j)}_{a,b}$ has a natural labelled, directed-graph structure, as follows: take the vertices to be the elements of $R^{(j)}_{a,b}$, and join a vertex $n$ to a vertex $np$ as $n\to_p np$, where $p\in\mathcal P^{(j)}_{a,b}$. We reduce this to a spanning tree of this graph by taking only those edges $n\to_p np$ for which $p$ is the largest prime factor of $np$. For our first example we draw this tree (Figure \ref{F-31}). 
\begin{enumerate}
\item[1.] Consider the set
\begin{align*}R^{(2)}_{3,1}=&1, 2, 4, 20, 220, 1220, 2420, 5060, 13420, 14740, 23620, 55660,\\ & 145420, 147620, 162140, 237820, 259820, 290620, 308660, \\ &339020, 447740, 847220, 899140, 1210220, \dots,\end{align*}
(sequence A127103 in Neil Sloane's Integer Sequences website). Now 
$$3^{20}-1=2^4\cdot 5^2\cdot 11^2\cdot 61 \cdot 1181,$$
showing that $\mathcal P^{(2)}_{3,1}(20)=\{11,11^2,61,1181\}$. Also   
\begin{align*}& 3^{220}-1=      2^4\cdot  5^3\cdot 11^3\cdot  23\cdot 61\cdot 67\cdot 661\cdot 1181\cdot 1321\cdot 3851\cdot 5501\\ 
&\cdot 177101\cdot 570461\cdot 659671\cdot 24472341743191\cdot 560088668384411\\
 &\cdot 927319729649066047885192700193701,\end{align*}
so that the elements of $\mathcal P^{(2)}_{3,1}(220)$ less than $10^6/220$, needed for Figure \ref{F-31}, are
$$11, 23, 61,  661, 1181, 1321, 3851.$$
\begin{figure}[h]
\begin{center}
\leavevmode \hbox{ \epsfxsize=4.5in
\epsffile{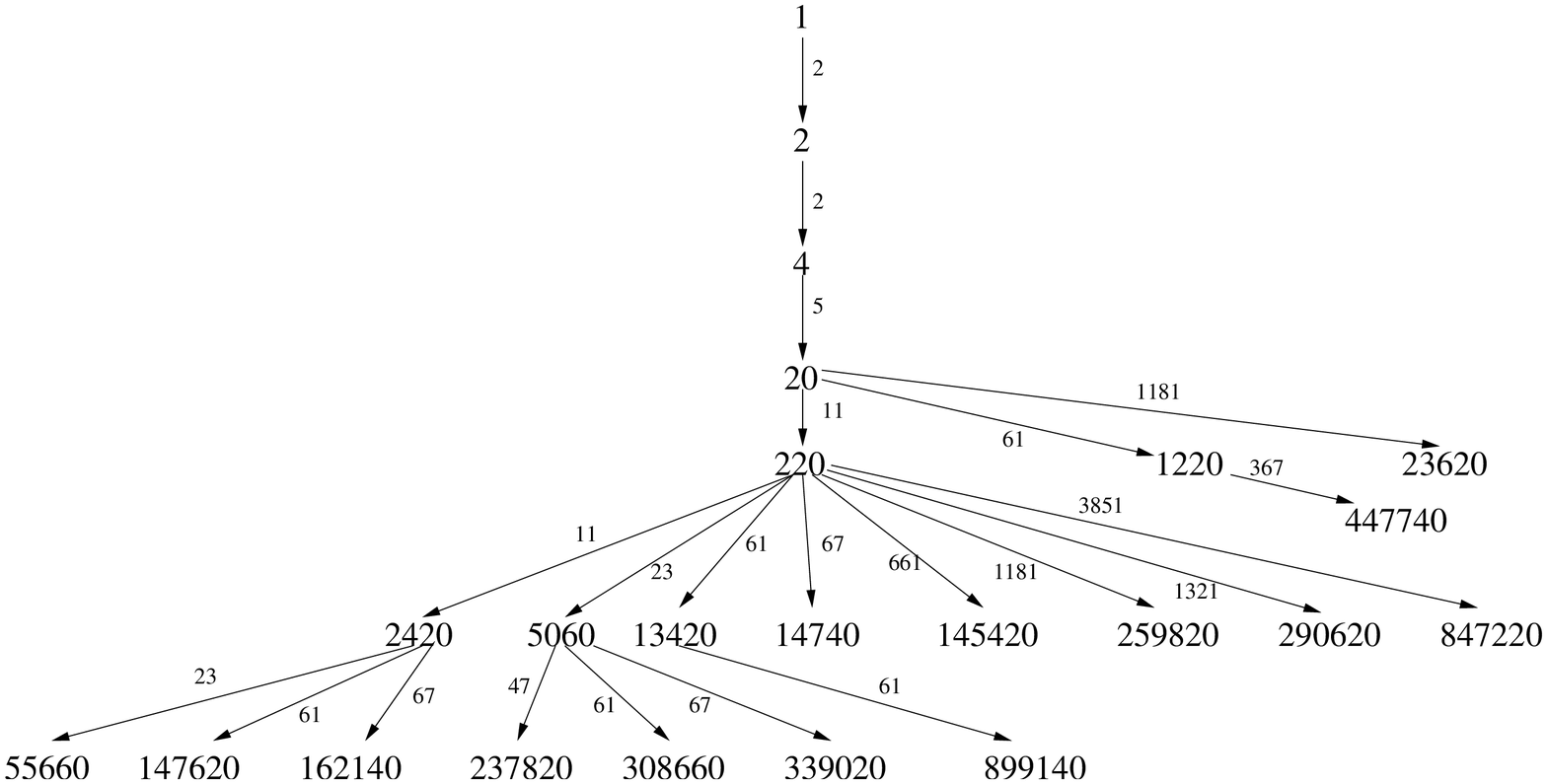} }
\end{center}
\caption{Part of the tree for $R^{(2)}_{3,1}$, showing all elements below $10^6$. } \label{F-31}
\end{figure}

\item[2.] Now $$R^{(2)}_{5,-1}
=1,2,3,4,6,12,21,42,52,84,156,186,372,\dots,$$
whose odd elements give
 $$R^{(2)+}_{5,-1}=1,3,21,609,903,2667,9429,26187,\dots.$$
See Section \ref{S-plus}.

\item[3.] We have
$$R^{(2)+}_{3,2}=  R^{(2)}_{3,-2}=1,5,55,1971145,\dots,$$
as all elements of $R^{(2)}_{3,-2}$ are odd.
Although this set is infinite by Theorem \ref{T-finite}, the next term is $1971145p$ where $p$ is the smallest prime factor of $3^{1971145}+2^{1971145}$ not dividing $1971145$. This looks difficult to compute, as it could be very large.

\item[4.] We have
$$R^{(2)}_{4,-3}=R^{(2)+}_{4,3}=1,7,2653,\dots.$$
Again, this set is infinite, but here only the three terms given are readily computable. The next term is $2653p$ where $p$ is the smallest prime factor of $4^{2653}+3^{2653}$ not dividing $2653$.

\item[5.]   This is an example of a set where more than one odd prime occurs as a squared factor in  elements of the set, in this case the primes $3$ and $7$. Every element greater than $9$ is of one of the forms $21m$, $63m$, $147m$, or $441m$, where $m$ is prime to $21$.
\begin{align*}R^{(2)}_{11,2}=&1, 3, 9, 21, 63, 147, 441, 609, 1827, 4137, 4263, 7959,\\
& 8001, 12411, 12789, 23877, 28959, 35931, 55713, 56007, \\
 &86877, 107793, 119973, 167139, 212541, 216237, 230811,\\
  &232029, 251517, 359919, 389403,\dots,
  \end{align*}
  
  \item[6.]\label{X-j} $R^{(4)}_{27001,1}=\{1, 2, 3, 5, 6, 10, 15, 30\}$. This is because $27001-1=2^3\cdot 3^3\cdot 5^3$, and none of $27001^n-1$ has a factor $p^3$ for any prime $p>5$ for any $n=1,2,3,5,6,10,15,30$.
  
  \item[7.] $R^{(3)}_{19,1}=\{1,2,3,6,42,1806\}$?
Is this the entire set? Yes, unless $19^{1806}-1$ is divisible by $p^2$ for some prime $p$ prime to $1806$, in which case $1806p$ would also be in the set. But determining whether or not this is the case seems to be a hard computational problem.

\item[8.] \label{X-gcd2} $R^{(4)}_{56,2}$,  an example with $\gcd(a,b)>1$. It seems highly probable that 
\begin{align*}
R^{(4)}_{56,2} &=(\mathcal F_2\setminus\{2,4,8\})\cup (3\mathcal F_2)\\
&= 1, 3, 6, 12, 16, 24, 32, 48, 64, 96, 128, 192, 256, 384, 512, 768, 1024,   \dots .
\end{align*}
However, in order to prove this, Theorem \ref{T-general} tells us that we need to know that $28^{2^\ell}\ne 1\pmod{p^3}$ for every  prime $p>3$ and every $\ell>0$. This seems very difficult!
Note that $R^{(4)}_{2,0}=\mathcal F_2\setminus\{2,4,8\}$ and $R^{(4)}_{28,1}=\{1,3\}.$
\end{enumerate}

\section{Final remarks.}\label{S-8}\begin{enumerate}
\item[1.]  By finding $R^{(j)}_{a,b}$, one is essentially solving the exponential Diophantine equation $x^jy=a^x-b^x$, since any solutions with $x\le 0$ are readily found.

\item[2.] It is known that $$R^{(1)}_{a,b}=\{n\in\N: n \text{ divides } \frac{a^n-b^n}{a-b}\}.$$
See \cite[Proposition 12]{S} (and also Andr{\'e}-Jeannin \cite[Theorem 2]{MR1131414} for some special cases of this result.) This result shows that $R^{(1)}_{a,b}=\{n\in\N: n \text{ divides } u_n\}$, where the $u_n$ are the generalised Fibonacci numbers of the first kind defined by the recurrence $u_0=1$, $u_1=1$, and  $u_{n+2}=(a+b)u_{n+1}-abu_n\,(n\ge 0)$. This provides a link between Theorem \ref{T-main1} of the present paper and the results of \cite{S}.

 The set $R^{(1)+}_{a,b}$ is a special case of a set $\{n\in\N: n \text{ divides } v_n\}$, also studied in \cite{S}. Here $(v_n)$ is the sequence of generalised Fibonacci numbers of the second kind. For earlier work on this topic see Somer \cite{MR1393479}.
  
  \item[3.] {\it Earlier and related work.}\label{R-earlier}
    The study of factors of $a^n-b^n$ dates back at least to Euler, who proved that all primitive prime factors of $a^n-b^n$ were $\ep{1}{n}$. See \cite[Theorem 1]{MR1503541}. Chapter 16 of Dickson \cite{MR0245499} (Vol 1) is devoted to the literature on factors of $a^n\pm b^n$.
  
    More specifically, Kennedy and Cooper \cite{MR995562} studied the set $R^{(1)}_{10,1}$. Andr{\'e}-Jeannin \cite[Corollary 4]{MR1131414} claimed (erroneously -- see Theorem \ref{T-plus})  that the congruence $a^n+b^n\ep{0}{n}$ always has infinitely many solutions $n$ for $\gcd(a,b)=1$.

\item[4.] {\it Acknowledgement.} I thank Hugh Montgomery for telling me about Zsigmondy's Theorem.
\end{enumerate}   
\bibliographystyle{plain}
\bibliography{subscripts,unpub}

\begin{thebibliography}{10}

\bibitem{MR1131414}
Richard Andr{\'e}-Jeannin.
\newblock Divisibility of generalized {F}ibonacci and {L}ucas numbers by their
  subscripts.
\newblock {\em Fibonacci Quart.}, 29(4):364--366, 1991.

\bibitem{MR1503541}
Geo.~D. Birkhoff and H.~S. Vandiver.
\newblock On the integral divisors of {$a\sp n-b\sp n$}.
\newblock {\em Ann. of Math. (2)}, 5(4):173--180, 1904.

\bibitem{MR1517263}
R.~D. Carmichael.
\newblock On the {N}umerical {F}actors of {C}ertain {A}rithmetic {F}orms.
\newblock {\em Amer. Math. Monthly}, 16(10):153--159, 1909.

\bibitem{MR0245499}
Leonard~Eugene Dickson.
\newblock {\em History of the theory of numbers. {V}ol. {I}: {D}ivisibility and
  primality.}
\newblock Chelsea Publishing Co., New York, 1966.

\bibitem{MR0349567}
Verner~E. Hoggatt, Jr. and Gerald~E. Bergum.
\newblock Divisibility and congruence relations.
\newblock {\em Fibonacci Quart.}, 12:189--195, 1974.

\bibitem{MR0197383}
Dov Jarden.
\newblock Divisibility of {F}ibonacci and {L}ucas numbers by their subscripts.
\newblock In {\em Recurring sequences: {A} collection of papers}, Second
  edition. Revised and enlarged, pages 68--75. Riveon Lematematika, Jerusalem
  (Israel), 1966.

\bibitem{MR995562}
Robert~E. Kennedy and Curtis~N. Cooper.
\newblock Niven repunits and {$10\sp n\equiv 1\pmod n$}.
\newblock {\em Fibonacci Quart.}, 27(2):139--143, 1989.

\bibitem{MR1505164}
Edouard Lucas.
\newblock Th\'eorie des {F}onctions {N}um\'eriques {S}implement
  {P}\'eriodiques.
\newblock {\em Amer. J. Math.}, 1(3):197--240, 1878.

\bibitem{MR1670556}
Paulo Ribenboim and Gary Walsh.
\newblock The {$ABC$} conjecture and the powerful part of terms in binary
  recurring sequences.
\newblock {\em J. Number Theory}, 74(1):134--147, 1999.

\bibitem{S}
Chris Smyth.
\newblock The terms in {L}ucas sequences divisible by their indices.
\newblock On arXiv: 0908.3832v1[math.NT], 2009.

\bibitem{MR1271392}
Lawrence Somer.
\newblock Divisibility of terms in {L}ucas sequences by their subscripts.
\newblock In {\em Applications of {F}ibonacci numbers, {V}ol. 5 ({S}t.
  {A}ndrews, 1992)}, pages 515--525. Kluwer Acad. Publ., Dordrecht, 1993.

\bibitem{MR1393479}
Lawrence Somer.
\newblock Divisibility of terms in {L}ucas sequences of the second kind by
  their subscripts.
\newblock In {\em Applications of {F}ibonacci numbers, {V}ol.\ 6 ({P}ullman,
  {WA}, 1994)}, pages 473--486. Kluwer Acad. Publ., Dordrecht, 1996.

\bibitem{Wa}
Gary Walsh.
\newblock On integers $n$ with the property $n\mid f_n$.
\newblock 5pp., unpublished, 1986.

\end{thebibliography}

\end{document}